\documentclass[letterpaper]{article}
\pagestyle{empty}
%% Language and font encodings
\usepackage[utf8x]{inputenc}
\usepackage[T1]{fontenc}
\usepackage[titletoc]{appendix}

\usepackage[margin=1in]{geometry}

%% Useful packages
\usepackage{amsmath,amssymb,amsthm,bbm,bm}
\usepackage{mathrsfs}
\usepackage[linesnumbered,ruled,noend]{algorithm2e}
\theoremstyle{definition}

\newtheorem{Remark}{Remark}
\newtheorem{Assumption}{Assumption}
\newtheorem{Definition}{Definition}

\newtheorem{Proposition}{Proposition}
\newtheorem{Theorem}{Theorem}

\usepackage{graphicx}
\usepackage[colorinlistoftodos]{todonotes}
\renewcommand{\Re}{\mathbb{R}}
\renewcommand{\Pr}{\mathrm{Pr}}

\newcommand{\Expectation}{\mathbb{E}}
\newcommand{\trace}{\mathtt{tr}}

\newcommand{\ScriptA}{\mathcal{A}}
\newcommand{\ScriptB}{\mathcal{B}}
\newcommand{\ScriptD}{\mathcal{D}}

\usepackage[subrefformat=parens,labelformat=parens]{subfig}
\newcommand{\minimize}{\textrm{minimize}}

\SetKwInput{KwInput}{Input}
\SetKwInput{KwOutput}{Output}

\usepackage[symbol]{footmisc}

\usepackage[breakable]{tcolorbox}

%%%%%%%%%%%%%%%%%%%%%%%%%%%%%%%%%%%%%%%%%%%%%%%%%%%%%%%%%%%%%%%%%%%%

\title{Stochastic Model Predictive Control for Constrained Linear Systems Using Optimal Covariance Steering}

\author{Kazuhide Okamoto\thanks{K. Okamoto is with the School of Aerospace Engineering, Georgia Institute of Technology, Atlanta, GA 30332-0150, USA. Email: kazuhide@gatech.edu} \qquad
	Panagiotis Tsiotras\thanks{P. Tsiotras is with the School of Aerospace Engineering, and also with the Institute for Robotics and Intelligent Machines, Georgia Institute of Technology, Atlanta, GA 30332-0150, USA. Email: tsiotras@gatech.edu}}
	
\begin{document}
	\maketitle
	
\begin{abstract}
	This work develops a stochastic model predictive controller~(SMPC) for uncertain linear systems with additive Gaussian noise subject to state and control constraints.
	The proposed approach is based on the recently developed finite-horizon optimal covariance steering control theory, which steers the mean and the covariance of the system state to prescribed target values at a given terminal time.
	We call our approach covariance steering-based SMPC, or CS-SMPC.  
	We show that the proposed approach has several advantages over traditional SMPC approaches in the literature.
	Specifically, it is shown that the newly developed algorithm can deal with unbounded Gaussian additive noise while ensuring stability and recursive feasibility, and incurs lower computational cost than previous similar approaches.
	The effectiveness of the proposed CS-SMPC approach is confirmed using numerical simulations.
\end{abstract}

\section{Introduction}\label{sec:Introduction}

Model predictive control (MPC), often also referred to as receding horizon control, has been an active research topic both in academia and industry because of its ability to deal with complex constraints, while also yielding near-optimal performance.
In the standard MPC framework, one solves a finite-horizon optimal control problem and executes the first element of the computed optimal control sequence.
At the next time step, one solves another finite-horizon optimal control problem with the updated initial condition.
By doing so, MPC implicitly closes the loop and achieves stability, assuming certain additional conditions hold~\cite{mayne2000constrained}.
Several variants and extensions of MPC have been proposed in the literature, such as explicit MPC~\cite{bemporad2002model,bemporad2002explicit,besselmann2012explicit}, the hybrid MPC~\cite{zhang2015computationally}, and learning MPC~\cite{rosolia2018learning,bouffard2012learning} , while several MPC versions have been applied to a variety of engineering and industrial domains~\cite{OldJonParMor2014,GrayBor2013,DiCairano2018,del2010automotive,DiCairano2018,eren2017model}.

Since MPC is a model-based control design, deterministic MPC approaches
are susceptible to errors owing to modeling uncertainties and exogenous disturbances.
In order to overcome this difficulty, robust MPC~(RMPC) and stochastic MPC (SMPC) extensions have been developed (see e.g.,~\cite{bemporad1999robust, mesbah2016stochastic,farina2016stochastic,kouvaritakis2016book} for an extensive literature review) to deal with various forms of uncertainty.

Robust MPC approaches assume deterministic uncertainties, which lie in a given compact set.
For example, min-max MPC~\cite{raimondo2009min} computes a control command that can deal with the worst-case scenario in terms of system uncertainty. 
Another RMPC approach is the tube-MPC~\cite{langson2004robust}, which 
separates the controller to a nominal controller and a feedback controller that is proportional to the deviation from the nominal state value using 
a stabilizing state feedback gain, and thus achieves asymptotic stability to a set~\cite{mayne2005robust}.
RMPC approaches are effective against worst-case deterministic disturbances, but can be conservative in case of stochastic disturbances, 
since they ignore any knowledge about the probabilistic nature of the disturbance.  
In addition, they do not even guarantee recursive
feasibility in case of possibly unbounded disturbances.
Recursive feasibility is a crucial property that ensures that the MPC optimization problem has a solution (i.e., satisfies all the constraints) at each time step~\cite{mayne2014model,mayne2000constrained}.

In order to explicitly deal with the probability characteristics of system uncertainties, several stochastic MPC (SMPC) approaches have been developed.
As with RMPC, in SMPC feedback \emph{policies} are optimized instead of an open-loop control sequence alone.
However, SMPC abandons the worst-case point of view for an ``expected'' or ``average'' system behavior that takes into consideration the most likely disturbance (instead of the worst-case disturbance) the system may encounter in practice.
As a result, SMPC methods tend to provide better performance and they can even deal with unbounded disturbances.

Although there is no agreed consensus for classifying the numerous SMPC approaches proposed in the literature~\cite{mesbah2016stochastic}, 
the most common approaches are the so-called analytic approaches and the randomized (or scenario-based) approaches.
The former include stochastic-tube~\cite{cannon2009probabilisticConstrained,cannon2009probabilisticTubes,carvalho2014stochastic} and affine-parameterization~\cite{hokayem2012stochastic,oldewurtel2008tractable} approaches, which reformulate the cost and the probabilistic constraints in deterministic terms.
They typically assume some form of additive white Gaussian noise acting on the system, and are most closely related to the proposed CS-SMPC.
Scenario approaches such as~\cite{bernardini2009scenario,calafiore2012robust}, on the other hand,
compute expected future system behavior by generating randomly several noise realizations.
For this reason they can handle more generic systems, costs and state and control constraints.
However, their computational requirements are much higher than analytic approaches.
In addition, their feasibility and convergence properties are difficult to access.
Scenario-based approaches will not be discussed further in this work.

From the analytic approaches, 
stochastic-tube MPC~\cite{cannon2009probabilisticConstrained, cannon2009probabilisticTubes, cannon2011stochastic, kouvaritakis2010explicit,lorenzen2019stochastic,hewing2018stochastic} decomposes the system state to deterministic and random components. 
The random component is controlled using a state feedback controller with a \emph{pre-computed} stabilizing gain, and only the additional control command to steer the deterministic component is computed online.
By doing so, the stochastic-tube MPC approach succeeds in avoiding the optimization over arbitrary feedback policies. 
Although this approach reduces computational complexity, it requires
trial and error to compute a priori a state feedback gain that is not too conservative, especially when constraints are expected to be active. 

In order to overcome the off-line computation of the feedback gain, the affine parameterization SMPC approach has been proposed~\cite{hokayem2012stochastic,oldewurtel2008tractable}. 
In the affine parameterization approach, both the feedback gain and the deterministic component are design variables that have to be simultaneously optimized online. 
Figure~\ref{fig:StochasticTubeVSCSSMPC} tries to illustrate pictorially the main difference between these two approaches. 
The stochastic-tube MPC approach knows the future state uncertainty evolution a priori because the feedback gain is pre-computed, and it tries to control the mean state so that  the predicted state satisfies the given constraints at the end of the horizon.
On the other hand, the affine parameterization approach simultaneously computes the feedback gains and the open-loop control sequences so that the predicted state solution satisfies the constraints.
As a result, the affine parameterization approach leads to less conservative controllers that tend to operate closer to the boundary of the constraints, thus increasing performance.

It is known that a state feedback parameterization approach leads to a non-convex problem~\cite{primbs2009stochastic,farina2013probabilistic,farina2015approach}.
In practice, one thus relaxes the constraints to make the problem convex, which may  lead to unnecessarily conservative results.
As an alternative, in~\cite{oldewurtel2008tractable} the authors employ 
a disturbance feedback parameterization of the control policy instead, which leads to a convex problem formulation.
It has been shown that disturbance feedback parameterization
is equivalent to state feedback parameterization~\cite{goulart2006optimization}. 
The disturbance feedback parameterization approach has been extended to accommodate input hard constraints in~\cite{hokayem2009stochastic,hokayem2010stable,hokayem2012stochastic,paulson2017stochastic}, a task that 
is difficult to satisfy using stochastic tube-MPC or state feedback parameterization approaches.

\begin{figure}[htbp]
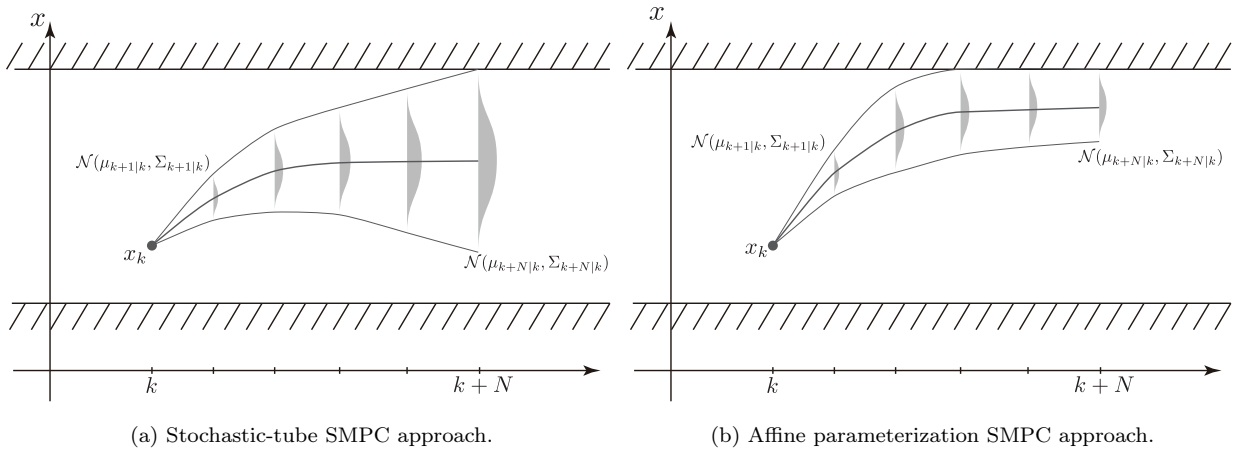

	\centering
	\subfloat[Stochastic-tube SMPC approach.\label{fig:StochasticTubeState}]{\includegraphics[width=0.5\columnwidth]{SchematicStochasticTubeState}}
	\subfloat[Affine parameterization SMPC approach.\label{fig:CSSMPCState}]{\includegraphics[width=0.5\columnwidth]{SchematicCSSMPCState}}\\
	\caption{Comparison of the state trajectory evolution by various SMPC approaches.~\label{fig:StochasticTubeVSCSSMPC}
	}
\end{figure}

Owing to the  problem stochasticity,  SMPC uses probabilistic (e.g., chance) constraints 
and imposes a maximum probability of state or input constraint violation~\cite{schwarm1999chance, ono2012joint}, 
instead of an absolute constraint violation requirement.
Satisfaction of the constraints with high probability is the price to pay for being able to explicitly handle uncertainty in the problem formulation.
When using the stochastic tube-MPC approach (and since the feedback gain is pre-computed) the chance constraints can be converted to linear inequality constraints, whereas in the affine parameterization approach (and since the feedback gains are computed online)
the chance constraints are converted to second-order cone constraints. 
Some applications of SMPC include building climate control~\cite{ma2012fast}, autonomous vehicle control~\cite{carvalho2014stochastic}, and bacterial fermentation control~\cite{paulson2017stochastic}.

As already mentioned, MPC controller design is based on a receding horizon approach, by which an optimization problem is solved repeatedly every time a new measurement of the state is available.
As a result, a key requirement for the success of an MPC design is the ability to satisfy the constraints at each iteration, a property known as
recursive feasibility.
While the issue of recursive feasibility is well understood for the case for deterministic MPC, 
showing the same for SMPC is much more challenging.
This is because, in general, it is not  possible to
enforce recursive satisfaction of the state and control constraint in the face
of \textit{unbounded} additive disturbances.
Therefore,  most works on SMPC achieve recursive feasibility and convergence by assuming a bounded probability distribution of the disturbance, e.g., a truncated Gaussian distribution.
The theory for SMPC dealing with unbounded disturbances is less developed.
Some recent work includes~\cite{paulson2017stochastic, farina2013probabilistic, paulson2015stability,KouCannon15}.
In \cite{KouCannon15} a stochastic tube-SMPC approach is proposed,
but the approach cannot handle hard input constraints as the feedback gains are computed offline.
The approach proposed in~\cite{farina2013probabilistic} devises a 
re-initialization strategy that switches between closed-loop and open-loop control  to ensure recursive feasibility.
It also uses state-feedback parameterization, which leads to the need to solve a non-convex programming problem.
Thus, these approaches may have difficulty in computing the system covariance at each time step.
In~\cite{paulson2017stochastic} the authors consider the MPC problem for stochastic
linear systems with arbitrary, possibly unbounded, disturbances subject to 
both joint state chance constraints and hard input constraints.
Contrary to the work in~\cite{farina2013probabilistic} the authors of~\cite{paulson2017stochastic} do not use a re-initialization strategy to ensure recursive feasibility and stability, 
but instead they suggest to soften the chance constraints.
In \cite{paulson2015stability} the authors address a SMPC problem for additive Gaussian process noise and time-invariant probabilistic  system uncertainty using a state feedback parameterization.
Since guaranteeing feasibility and stability under both unbounded noise and system uncertainty is very challenging, 
stability is established  for the unconstrained case only.

The main difficulty in all SMPC problems is controlling the dispersion of the trajectories owing to noise, which makes
ensuring recursive feasibility challenging, especially in case of noise with unbounded support. 
The approach we propose in this article overcomes this difficulty of controlling the state dispersion at the end of the horizon,
by utilizing the results from the newly developed finite horizon optimal covariance steering theory~\cite{okamoto2018Optimal,okamoto2018Optimalb}.
An optimal covariance steering controller steers the covariance of a stochastic linear system to a target terminal value, while minimizing a state and control expectation-dependent cost.
While infinite horizon covariance control has been researched extensively 
since the late `80s~\cite{hotz1985covariance, hotz1987covariance}, the finite horizon covariance steering case had not been investigated until very recently~\cite{chen2016optimalI,bakolas2016optimalCDC,bakolas2018finite,goldshtein2017finite}.
Specifically, and most closely related to the results in this paper, in our previous work~\cite{okamoto2018Optimal} 
we introduced state chance constraints into the optimal covariance steering problem, and used it to solve challenging path planning problems in the presence of uncertainty~\cite{okamoto2018Optimalb}.
To our knowledge, the current paper is the first work  in the literature to incorporate optimal covariance steering into the SMPC framework,
and thus deals with the issue of recursive feasibility and convergence of SMPC in a principled manner.
Since the proposed CS-SMPC approach simultaneously computes the open-loop and the feedback gains, it can be regarded as an affine parameterization approach.

The main contributions of this work are summarized as follows:
First, we introduce a new SMPC approach for systems with unbounded noise 
that takes advantage of the recent theory of covariance steering to construct the terminal constraint sets that are needed in order to ensure stability and recursive feasibility.
The use of finite-time covariance steering allows for the direct control of the state covariance at the end of the horizon, 
which encodes the specification that the state will be in a given terminal set with high probability, 
thus making the analysis for stability and recursive feasibility less conservative and
more direct and intuitive.
Second, 
we introduce a new feedback strategy for SMPC in terms of a filtered version of the disturbance noise that results in a convex problem formulation.
This new feedback parameterization yields a structured block-diagonal matrix of the gains, resulting in lower computational effort compared to the 
disturbance feedback parameterization.
Third, we elucidate the connections between optimal covariance steering and the affine-parameterized policies used in several SMPC methodolgies.
Although optimal covariance steering and the affine-parameterization SMPC approach have been developed independently from each other, they have many theoretical similarities, and by combining them together we can achieve computational efficiency along with the ability to deal with unbounded Gaussian disturbances, while also guaranteeing feasibility and stability.

The remainder of this paper is organized as follows.
Section~\ref{sec:ProblemStatement} formulates the problem and introduces the general SMPC problem setup.
In Section~\ref{sec:Preliminaries}
we review some mathematical preliminaries and briefly discuss the newly developed finite-horizon optimal covariance steering algorithm~\cite{okamoto2018Optimalb}, which is the basis for the CS-SMPC.
Section~\ref{sec:CS_SMPCDesign} introduces the proposed CS-SMPC approach, followed by the proof of recursive feasibility and guaranteed stability.
In Section~\ref{sec:Numerical Simulation} we validate the effectiveness of the CS-SMPC approach using numerical simulations.
Finally, Section~\ref{sec:Summary} summarizes this work and discusses  possible future research directions.

\section*{Notation}

The notation used in this paper is quite standard.
We denote the set of $n$-dimensional real vectors and $m\times n$ real matrices by $\Re^{n}$ and $\Re^{m\times n}$, respectively.
We use $P \succ 0$ and $P \succeq 0$ to denote the fact that the matrix $P$ is symmetric positive definite and semidefinite, respectively.
Also, we use $P \geq 0$ and $P > 0$ to denote element-wise inequalities.
$\trace(P)$ denotes the trace of the square matrix $P$, and $\mathtt{blkdiag}(P_0,\ldots,P_N)$ denotes the block-diagonal matrix with 
matrices $P_0,\ldots, P_N$.
$\|v\|$ is the 2-norm of the vector $v$ and 
$\|P\|_F$ denotes the Frobenius norm of the matrix $P$.
$I_d \in \Re^{d\times d}$ is the identity matrix of size $d$ and
$\mathcal{R}(P)$ denotes the range space of the matrix $P$.
The notation
$x \sim \mathcal{N}(\mu, \Sigma)$ indicates that the random variable $x$ is
sampled from a Gaussian distribution with mean $\mu$ and (co)variance $\Sigma$.
$\Expectation$ denotes the expectation operation and
$\Expectation_k[\cdot] = \Expectation[\cdot|x_k]$ denotes the expectation conditioned on the measured state $x_k$ at time step $k$. 
Finally, 
$\Pr(A)$ denotes probability of the event $A$, and $\Pr_k(A) = \Pr(A | x_k)$ denotes the conditional probability given $x_k$.

\section{Problem Statement\label{sec:ProblemStatement}}

In this section we formulate the general SMPC problem. 

\subsection{Problem Formulation}

We consider the following discrete-time stochastic linear time-invariant (LTI) system with additive noise,
\begin{equation}  \label{eq:SystemDynamics}
	x_{k+1} = Ax_k + Bu_k + D w_k, \quad x_0 \sim \mathcal{N}(\mu_0,\Sigma_0),
\end{equation}
where $k$ is the time-step index, $x_k \in\Re^{n_x}$ is the state, $u_k \in\Re^{n_u}$ is the control input, and $w_k \in\Re^{n_w}$ is a zero-mean independently and identically distributed (i.i.d.) Gaussian noise.
In addition, $A \in \Re^{n_x \times n_x}$, $B \in \Re^{n_x\times n_u}$, and $D \in \Re^{n_x \times n_w}$ are constant system matrices. 
The noise $w_k$ has the following properties,
\begin{equation}
	\mathbb{E}\left[w_k\right] = 0, \qquad \qquad
	\mathbb{E}\left[w_{k_1}{w_{k_2}^{\top}}\right] =
	I_{n_w}\delta_{k_1,k_2},
	\label{eq:GaussianNoise}
\end{equation}
where $\delta_{k_1,k_2}$ is the Kronecker delta function.
In addition, we have the following condition
\begin{equation}
	\mathbb{E}\left[x_{k_1}w_{k_2}^{\top}\right]=0,\qquad0 \leq k_1 \leq k_2.
\end{equation}
which stems from causality considerations.

Finally, we assume perfect state information and ignore the measurement noise, and thus
\begin{equation}\label{eq:x0}
	\Expectation[x_0] = \mu_0 = x_0,\qquad
	\Sigma_0 = 0.
\end{equation}

In the stochastic equation description (\ref{eq:SystemDynamics}) the system matrices $A, B$, and $D$ are assumed to be known.
The case when these matrices are not exactly known or depend on some random parameters is
out of the scope of this current paper, but is an important problem in its own right.
Probabilistic MPC approaches that can also handle parametric uncertainties in the system matrices have been proposed in the literature;
see, for example~\cite{kouvaritakis2016book} and the references therein.

It will be assumed that
the state and control inputs in (\ref{eq:SystemDynamics}) are subject to the constraints
\begin{align}\label{eq:state_and_input_const}
	x_k \in \mathcal{X},\quad u_k \in \mathcal{U},
\end{align}
for all $k \geq 0$, where $\mathcal{X} \subseteq \Re^{n_x}$ and $\mathcal{U}\subseteq \Re^{n_u}$ are convex sets containing the origin.
Throughout this work, we assume that the sets $\mathcal{X}$ and $\mathcal{U}$ are convex polytopes, represented as the intersection of a finite number of linear inequality constraints as follows
\begin{align}
	\mathcal{X} &\triangleq \bigcap_{i = 0}^{N_s-1} \left\{x: \alpha_{x,i}^\top x \leq \beta_{x,i}\right\},\label{eq:Xdefinition}\\
	\mathcal{U} &\triangleq \bigcap_{j = 0}^{N_c-1} \left\{u: \alpha_{u,j}^\top u \leq \beta_{u,j}\right\},\label{eq:Udefinition}
\end{align}
where $\alpha_{x,i}\in\Re^{n_x}$ and $\alpha_{u,j}\in\Re^{n_u}$ are constant vectors, and $\beta_{x,i}$ and $\beta_{u,j}$ are constant scalars.
In~(\ref{eq:Xdefinition}) and (\ref{eq:Udefinition}), $N_s$ and $N_c$ denote the number of state and control constraints defining the polytopes, respectively.
Notice that, since the disturbance $w_k$ in~(\ref{eq:SystemDynamics}) is possibly unbounded, the state may be unbounded as well.
Thus, we formulate the state constraints $x_k \in \mathcal{X}$ probabilistically, in terms of chance constraints
\begin{align}\label{eq:origCC}
	\Pr(x_k \in \mathcal{X}) \geq 1 - \epsilon_x,
\end{align}
where $\epsilon_x \ge 0$ is the maximum probability of constraint violation. 
In practice, typically, $\epsilon_x \ll 1$.
In this work we restrict the range of $\epsilon_x$ to the interval $\epsilon_x \in [0, 0.5)$, which will allow us to provide an alternative, 
deterministic and convex formulation of the chance constraint (\ref{eq:origCC}).
Using Boole's inequality~\cite{prekopa1988boole}, the constraints (\ref{eq:Xdefinition}) and (\ref{eq:origCC}) are satisfied, assuming the inequality
\begin{align}
	\Pr\left(\alpha_{x,i}^\top x_k \leq \beta_{x,i} \right) &\geq 1 - p_{x,i},
\end{align}
holds for all $i=0,\ldots,N_s-1$, where $p_{x,i}$ are such that
\begin{align}\label{eq:p_xj}
	\sum_{i=0}^{N_s-1} p_{x,i}\leq \epsilon_x,
\end{align}
where $p_{x,i} \in [0, 0.5)$ for all $i = 0, \ldots, N_s-1$~\cite{blackmore2009convex}. 
Similarly, by replacing the second inclusion in~(\ref{eq:state_and_input_const}) with the chance constraint
\begin{align}
	\Pr(u_k \in \mathcal{U}) \geq 1 - \epsilon_u,
\end{align}
where $\epsilon_u \in [0, 0.5)$, and along with~(\ref{eq:Udefinition}), we impose the following conditions
\begin{subequations} \label{eq:p_us}
\begin{align}
	\Pr\left(\alpha_{u,j}^\top u_k \leq \beta_{u,j} \right) &\geq 1 -  p_{u,j},   \label{eq:p_usA}\\
	\sum_{j=0}^{N_c-1} p_{u,j} &\leq \epsilon_u,\label{eq:p_usB}
\end{align}
\end{subequations}	
where $p_{u,j} \in [0, 0.5)$ for all $j = 0, \ldots, N_c-1$.

Our objective is to find a control sequence $\{u_k\}_{k=0}^\infty$ to minimize
\begin{equation}
J_\infty(x_0; u_0,u_1,\ldots) = \mathbb{E}\left[\sum_{k=0}^{\infty}x_k^\top Q x_k + u_k^\top R u_k\right], 
\end{equation}
where $Q \succeq 0$ and $R \succ 0$.

In summary, the aim of this work is to design a control sequence $\{u_k\}_{k = 0}^\infty$ that solves the following infinite horizon optimal control problem with state and control expectation-dependent quadratic cost, subject to chance constraints on the state and the input as follows.
\begin{tcolorbox}[width = \columnwidth, colback = white, arc = 0pt]\vspace{-15pt}
	\begin{subequations}\label{prob:InfiniteHorizonOptimalControlProblem}
		\begin{align}
		\min_{u_0,u_1,\ldots}	J_\infty(x_0; u_0,u_1,\ldots) &= \mathbb{E}\left[\sum_{k=0}^{\infty}x_k^\top Q x_k + u_k^\top R u_k\right], \\
		\textrm{subject to }&\nonumber \\
		& x_{k+1} = Ax_k + Bu_k + D w_k,\quad x_0 \sim \mathcal{N}(\mu_0,\Sigma_0), \\
		& \Pr\left(\alpha_{x,i}^\top x_k \leq \beta_{x,i} \right) \geq 1 - p_{x,i},\quad i = 0,\ldots,N_s-1,  \label{eq:InfinitecontrolSC} \\
		& \Pr\left(\alpha_{u,j}^\top u_k \leq \beta_{u,j} \right) \geq 1 - p_{u,j},\quad j = 0,\ldots,N_c-1,\label{eq:InfinitecontrolCC}
		\end{align}
	\end{subequations}
	where $Q \succeq 0$ and $R \succ 0$.
\end{tcolorbox}

\begin{Remark}
In the problem formulation (\ref{prob:InfiniteHorizonOptimalControlProblem}) we have probabilistic constraints for both the state and the control input.
In practice one would prefer, of course, to impose hard constraints in terms of the input.
Although hard input constraints can be incorporated in covariance steering problems~\cite{OkaTsi:cdc19},
enforcing input hard constraints for SMPC problems and guaranteeing stability and feasibility is not possible if the disturbance is unbounded and the system is not Schur stable~\cite{hokayem2009stochastic} or Lyapunov stable~\cite{hokayem2010stable}.
This is not a major issue for most engineering applications however, since almost sure satisfaction of the constraint (\ref{eq:InfinitecontrolCC}) can be ensured by choosing a very small value of $p_{u,j}$.
For other alternatives, see~\cite{KordaCiglerOlde2011,farina2015approach}. 
For more details, on the difference between hard and chance input constraints in SMPC problems
we refer the reader to the discussion in~\cite{chatterjee2012mean,paulson2017stochastic}.

\end{Remark}

\subsection{SMPC Formulation}

The SMPC aims to approximately solve the infinite-horizon optimal control problem~(\ref{prob:InfiniteHorizonOptimalControlProblem}) by solving, at each time step $k$, the following \emph{finite} horizon optimal control problem, instead.
\begin{tcolorbox}[width = \columnwidth, colback = white, arc = 0pt]\vspace{-15pt}
	\begin{subequations}\label{prob:FiniteHorizonOptimalControlProblem}
		\begin{align}
		\min_{u_{k|k},\ldots,u_{k+N-1|k}} &J_N(\mu_k,\Sigma_k;u_{k|k},\ldots,u_{k+N-1|k}) = \mathbb{E}_k\left[\sum_{t=k}^{k+N-1}x_{t|k}^\top Q x_{t|k} + u_{t|k}^\top R u_{t|k}\right] + J_f(x_{k+N|k}),\label{eq:originalObjFunc} \\
		\textrm{subject to }& \nonumber\\
		& x_{t+1|k} = A x_{t|k} + Bu_{t|k} + D w_t,\quad x_{k|k} = x_k \sim \mathcal{N}(\mu_k,\Sigma_k),\label{eq:MPCDynamics}\\
		& \Pr_k\left(\alpha_{x,i}^\top x_{t|k} \leq \beta_{x,i} \right) \geq 1 - p_{x,i},\quad i = 0,\ldots, N_s-1, \label{eq:MPCCC}\\
		& \Pr_k\left(\alpha_{u,j}^\top u_{t|k} \leq \beta_{u,j} \right) \geq 1 - p_{u,j},\quad j = 0,\ldots, N_c-1, \label{eq:MPCcontrolC}
		\end{align}
	\end{subequations}
	where $t = k, \ldots, k+N-1$ and $N$ is the optimization horizon.
\end{tcolorbox}

The notation $x_{t|k}$ denotes the state at time step $t$ predicted at time step $k \ge 0$ where $t \ge k$.
The variables $\mu_k$ and $\Sigma_k$ in (\ref{eq:MPCDynamics}) are the mean and the covariance of the state $x_k$, and are assumed to be given at step $k$.

We denote the optimal solution to~(\ref{prob:FiniteHorizonOptimalControlProblem}) as $\{u^\ast_{k|k},\ \ldots,\ u^\ast_{k+N-1|k}\}$.
At time step $k$, we apply $u^\ast_{k|k}$ to the system~(\ref{eq:SystemDynamics}), i.e., $	u_k = u_{k|k}^\ast$.
The function $J_f(\cdot): \Re^{n_x} \mapsto \Re$ is a terminal cost that needs to be designed properly to ensure stability~\cite{mayne2000constrained,farina2016stochastic,mesbah2016stochastic}.
In this work, we show that optimal covariance steering theory helps us choose an appropriate expression for $J_f(\cdot)$ to solve Problem~(\ref{prob:FiniteHorizonOptimalControlProblem})  efficiently and robustly.

\section{Optimal Covariance Steering} \label{sec:Preliminaries}

In this section, we introduce the basic theory behind optimal covariance steering controller design under state and control chance constraints, which will be applied to solve the SMPC problem~(\ref{prob:FiniteHorizonOptimalControlProblem}).
In the discrete-time optimal covariance steering problem setup~\cite{okamoto2018Optimal}, we wish to steer the state distribution of system~(\ref{eq:SystemDynamics}) from an initial Gaussian distribution
\begin{align}\label{eq:x0_OCS}
	x_0 \sim \mathcal{N}(\mu_0, \Sigma_0),
\end{align}
to a prescribed Gaussian distribution at a given time step $N$, i.e., 
\begin{align}
	x_N = x_f \sim \mathcal{N}(\mu_f, \Sigma_f).
\end{align}
Specifically, given an initial distribution~(\ref{eq:x0_OCS}), we wish to solve the following optimization problem.

\begin{tcolorbox}[width = \columnwidth, colback = white, arc = 0pt]\vspace{-15pt}
	\begin{subequations}\label{prob:OptimalCovarianceSteering}
		\begin{align}
			\min_{u_0,\ldots,u_{N-1}} &J(u_0,\ldots,u_{N-1}) = \mathbb{E}_k\left[\sum_{k=0}^{N-1}x_k^\top Q x_k + u_k^\top R u_k\right],\label{eq:CSObjFunc} \\
			\textrm{subject to } &\nonumber \\
			& x_{k+1} = Ax_k + Bu_k + D w_k,\quad x_0 \sim \mathcal{N}(\mu_0, \Sigma_0)\label{eq:SystemDynamicsOCS} \\
			& \Pr\left(\alpha_{x,i}^\top x_k \leq \beta_{x,i} \right) \geq 1 - p_{x,i}, \quad i = 0,\ldots, N_s-1,\label{eq:CSCCx}\\
			& \Pr\left(\alpha_{u,j}^\top u_k \leq \beta_{u,j} \right) \geq 1 - p_{u,j}, \quad j = 0,\ldots, N_c-1,\label{eq:CSCCu}\\
			& x_N = x_f \sim\mathcal{N}(\mu_f,\Sigma_f),\label{eq:CSxN}
		\end{align}
	\end{subequations}
	for $k = 0, \ldots, N-1$, where we assume that $\Sigma_0 \succeq 0$ and  $\Sigma_f \succ 0$, and $w_k$, $p_{x,i}$, and $p_{u,j}$ as in~(\ref{eq:GaussianNoise}),~(\ref{eq:p_xj}), and~(\ref{eq:p_us}), respectively.
\end{tcolorbox}

In order to solve problem (\ref{prob:OptimalCovarianceSteering}) we make the following assumptions to develop the finite horizon optimal covariance steering theory.

\begin{Assumption}		\label{remark:CSAssumption_}
The pair $(A,B)$ in (\ref{eq:SystemDynamics})  is controllable.
\end{Assumption}

\begin{Assumption}	\label{ass:Horizon_}
The horizon $N \ge n_x$.  This assumption, along with Assumption~\ref{remark:CSAssumption_}, ensures  that
 $x_f$ is reachable from $x_0$ for any $x_f \in \Re^{n_x}$, provided that $w_k = 0$ for $k = 0,\ldots,N-1$ and no state and control constraints.
	This assumption implies that, given any $x_f \in \Re^{n_x}$ and $x_0 \in \Re^{n_x}$, there exists a sequence of control inputs $\{u_0,\ldots,u_{N-1}\}$ that steers $x_0$ to $x_f$ in the absence of disturbances or any constraints.
\end{Assumption}

In order to proceed, we first rewrite the system dynamics in a more convenient form.
Following~\cite{goldshtein2017finite}, it is straightforward to obtain the following equivalent form of the system dynamics~(\ref{eq:SystemDynamicsOCS}), as follows
\begin{equation}\label{eq:X=Ax0+BU+DW}
	X = \ScriptA x_{0} + \ScriptB U+\ScriptD W,
\end{equation}
where
\begin{align}\label{eq:XUW}
	X = \begin{bmatrix}
	x_{0}\\x_{1}\\ \vdots \\ x_{N}
	\end{bmatrix}\in \Re^{(N+1) n_x},
	\quad
	U = \begin{bmatrix}
	u_{0}\\u_{1}\\ \vdots \\ u_{N-1}
	\end{bmatrix}\in\Re^{Nn_u},
	\quad
	W = \begin{bmatrix}
	w_{0}\\w_{1}\\ \vdots \\ w_{N-1}
	\end{bmatrix}\in \Re^{Nn_w},
\end{align}
and
where $\ScriptA\in\Re^{(N+1)n_x\times n_x}$, $\ScriptB \in\Re^{(N+1)n_x\times Nn_u}$, and $\ScriptD \in\Re^{(N+1)n_x\times Nn_w}$.
The explicit expressions for these matrices can be found, e.g., in~\cite{goldshtein2017finite}.
The initial conditions and noise satisfy
\begin{equation} \label{eq:Ex0x0x0WWW}
	\mathbb{E}[x_0x_0^\top] = \Sigma_0 + \mu_0\mu_0^\top,\qquad
	\mathbb{E}[x_0W^\top] = 0, \qquad
	\mathbb{E}[WW^\top] = I_{Nn_w}.
\end{equation}
Introducing the matrices
	\begin{align*}
			E_k &= \left[0_{n_x,kn_x}, I_{n_x},0_{n_x,(N-k)n_x}\right]\in \Re^{n_x\times(N+1)n_x}, \quad k = 0,\ldots,N, \\
			F_k &= \left[0_{n_u,kn_u}, I_{n_u},0_{n_u,(N-k-1)n_u}\right]\in \Re^{n_u\times Nn_u}, \quad k = 0,\ldots,N-1,
	\end{align*}
we have that the state and control at time step $k$ can be expressed in terms of $X$ and $U$ via $x_k = E_k X$ and $u_k = F_k U$.	

\begin{Proposition}
	Given~(\ref{eq:x0_OCS}), one can derive the following equivalent form of Problem~(\ref{prob:OptimalCovarianceSteering}) using~(\ref{eq:X=Ax0+BU+DW}), (\ref{eq:XUW}), and (\ref{eq:Ex0x0x0WWW}).
	
	\begin{tcolorbox}[width = \columnwidth, colback = white, arc = 0pt]\vspace{-15pt}
		\begin{subequations}\label{prob:OptimalCovarianceSteeringConverted}
			\begin{align}
			\min_{U}\ &J(U) =  \mathbb{E}\left[X^\top \bar{Q} X + U^\top \bar{R} U \right],\label{eq:CSObjFuncConverted}\\
			\textrm{subject to }\nonumber \\
			& X = \ScriptA x_{0} + \ScriptB U + \ScriptD W,\quad  x_0 \sim \mathcal{N}(\mu_0, \Sigma_0),\\
			&\Pr\left(\alpha_{x,i}^\top E_k X \leq \beta_{x,i} \right) \geq 1 - p_{x,i},\quad i = 0,\ldots, N_s - 1\label{eq:CSCCX}\\
			& \Pr\left(\alpha_{u,j}^\top F_k U \leq \beta_{u,j} \right) \geq 1 - p_{u,j},\quad j = 0,\ldots,N_c - 1 \label{eq:CSCCU}\\
			&\mu_f = E_N\mathbb{E}[X],\\
			&\Sigma_f = E_N\left(\mathbb{E}[XX^\top] - \mathbb{E}[X]\mathbb{E}[X]^\top\right) E_N^\top,\label{eq:CSTermCovConst}
			\end{align}
		\end{subequations}
		for $k = 0,\ldots,N-1$, where 
		\begin{align*}
		\bar{Q} &= \texttt{blkdiag}(Q,\ldots,Q,0) \in \Re^{(N+1)n_x\times(N+1)n_x},\\
		\bar{R} &= \texttt{blkdiag}(R,\ldots,R) \in \Re^{Nn_u\times Nn_u}.
		\end{align*}
	\end{tcolorbox}
\end{Proposition}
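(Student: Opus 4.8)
The plan is to show that Problem~(\ref{prob:OptimalCovarianceSteeringConverted}) is obtained from Problem~(\ref{prob:OptimalCovarianceSteering}) by a sequence of exact, term-by-term substitutions, handling the objective, the per-step chance constraints, and the terminal distributional constraint in turn. Throughout I would work with the lifted dynamics $X = \ScriptA x_0 + \ScriptB U + \ScriptD W$ of~(\ref{eq:X=Ax0+BU+DW}), whose construction by unrolling the recursion~(\ref{eq:SystemDynamicsOCS}) I take as given following~\cite{goldshtein2017finite}, together with the selection identities $x_k = E_k X$ and $u_k = F_k U$.

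First I would recast the objective. Substituting $x_k = E_k X$ and $u_k = F_k U$ into the running cost and invoking the block-diagonal definitions of $\bar Q$ and $\bar R$ gives $\sum_{k=0}^{N-1} x_k^\top Q x_k = X^\top \bar Q X$ and $\sum_{k=0}^{N-1} u_k^\top R u_k = U^\top \bar R U$; here the trailing zero block in $\bar Q = \texttt{blkdiag}(Q,\ldots,Q,0)$ precisely encodes the fact that the running cost penalizes $x_0,\ldots,x_{N-1}$ but not the terminal state $x_N$. Taking expectations recovers~(\ref{eq:CSObjFuncConverted}). The per-step chance constraints transfer immediately: since $E_k$ and $F_k$ are deterministic selection matrices applied \emph{before} the probability is taken, the events $\{\alpha_{x,i}^\top x_k \le \beta_{x,i}\}$ and $\{\alpha_{u,j}^\top u_k \le \beta_{u,j}\}$ coincide with $\{\alpha_{x,i}^\top E_k X \le \beta_{x,i}\}$ and $\{\alpha_{u,j}^\top F_k U \le \beta_{u,j}\}$, so~(\ref{eq:CSCCx})--(\ref{eq:CSCCu}) become~(\ref{eq:CSCCX})--(\ref{eq:CSCCU}) with unchanged violation probabilities.

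The substantive step is reducing the terminal constraint~(\ref{eq:CSxN}) to the two moment conditions appearing in~(\ref{prob:OptimalCovarianceSteeringConverted}). The key observation is that, under an affine dependence of $U$ on $(x_0, W)$, the lifted state $X = \ScriptA x_0 + \ScriptB U + \ScriptD W$ is an affine function of the jointly Gaussian pair $(x_0, W)$ and is therefore itself Gaussian; hence $x_N = E_N X$ is Gaussian. Since a Gaussian law is determined uniquely by its first two moments, the distributional requirement $x_N \sim \mathcal{N}(\mu_f,\Sigma_f)$ is equivalent to matching the mean, $E_N \Expectation[X] = \mu_f$, and the covariance, $E_N\bigl(\Expectation[XX^\top] - \Expectation[X]\Expectation[X]^\top\bigr)E_N^\top = \Sigma_f$, which is exactly~(\ref{eq:CSTermCovConst}); the moments here are evaluated using~(\ref{eq:Ex0x0x0WWW}).

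I expect this last reduction to be the only step requiring genuine care. Replacing a constraint on the full terminal law by two moment constraints is justified solely because $x_N$ is Gaussian, which in turn hinges on restricting attention to control policies that are affine in the randomness; for a general nonlinear dependence of $U$ on $W$, moment matching would be necessary but not sufficient to guarantee $x_N \sim \mathcal{N}(\mu_f,\Sigma_f)$. Everything else — the objective and the per-step constraints — is a direct algebraic substitution through the deterministic maps $E_k$ and $F_k$, so once the Gaussianity of $X$ is secured the equivalence of the two problems follows.
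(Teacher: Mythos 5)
Your proposal is correct, and it is substantially more complete than the paper's own proof, which consists of a single sentence deferring everything to the prior work~\cite{okamoto2018Optimal} (plus the remark that $Q\succeq 0$, $R\succ 0$ imply $\bar{Q}\succeq 0$, $\bar{R}\succ 0$, which is recorded because it is needed later for convexity). Your handling of the objective and the per-step chance constraints via the selection matrices $E_k$, $F_k$ is exactly the routine substitution the paper delegates to that reference. Where you add genuine value is the terminal constraint: you correctly observe that replacing $x_N \sim \mathcal{N}(\mu_f,\Sigma_f)$ in~(\ref{eq:CSxN}) by the two moment conditions culminating in~(\ref{eq:CSTermCovConst}) is an \emph{equivalence} only when $x_N$ is guaranteed to be Gaussian, which holds for policies affine in the jointly Gaussian pair $(x_0,W)$ but not for arbitrary measurable policies; for a general policy the moment conditions are necessary but not sufficient, so Problem~(\ref{prob:OptimalCovarianceSteeringConverted}) is a priori a relaxation of Problem~(\ref{prob:OptimalCovarianceSteering}). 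The paper never confronts this point inside the proposition --- it is resolved only implicitly when Theorem~\ref{theorem:OCS} subsequently restricts attention to the affine parameterization $u_k = v_k + K_k y_k$ of~(\ref{eq:OCSController}) (and further relaxes the covariance equality to an inequality). Flagging that caveat explicitly is the right thing to do, and your argument would stand as a self-contained replacement for the paper's citation-style proof; the only minor omission is the positivity remark on $\bar{Q}$ and $\bar{R}$ noted above.
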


\begin{proof}
	The proof is straightforward from the discussion in~\cite{okamoto2018Optimal}.
	Note also that, because $Q\succeq0$ and $R\succ0$, it follows that $\bar{Q}\succeq0$ and $\bar{R}\succ0$.
\end{proof}

The following theorem shows that Problem~(\ref{prob:OptimalCovarianceSteeringConverted}) can be relaxed to a convex programming problem.

\begin{Theorem}\label{theorem:OCS}
	Given~(\ref{eq:x0}),~(\ref{eq:X=Ax0+BU+DW}),~(\ref{eq:XUW}),~(\ref{eq:Ex0x0x0WWW}), and the relaxation of~(\ref{eq:CSTermCovConst}) to the inequality
	\begin{align}\label{eq:CSTermCovConstRelaxed}
		\Sigma_f \succeq E_N\left(\mathbb{E}[XX^\top] - \mathbb{E}[X]\mathbb{E}[X]^\top\right) E_N^\top,
	\end{align}
	along with the control law
	\begin{align} \label{eq:OCSController}
		u_k &= v_k + K_k y_k,
	\end{align}
	where $v_k \in \Re^{n_u}$, $K_k \in \Re^{n_u \times n_x}$, and $y_k\in \Re^{n_x}$ from
	\begin{subequations}\label{eq:yDynamics}
		\begin{align}
			y_{k+1} &= A y_k + D w_k,\\
			y_0 &= x_0 - \mu_0, \label{eq:y0}
		\end{align}
	\end{subequations}
	reformulates Problem~(\ref{prob:OptimalCovarianceSteeringConverted}) as the following convex programming problem.
	\begin{tcolorbox}[width = \columnwidth, colback = white, arc = 0pt]\vspace{-15pt}
		\begin{subequations}\label{prob:OCSInputChanceConstrained}
			\begin{align}
			\min_{V,K}\ J(V, K) &= \trace\left[\left((I+\ScriptB K)^\top\bar{Q}(I+\ScriptB K)+K^\top\bar{R}K\right)\Sigma_y\right] \nonumber \\
			& \qquad + (\ScriptA\mu_0 + \ScriptB V)^\top \bar{Q} (\ScriptA\mu_0 + \ScriptB V) + V^\top \bar{R} V.\\
			\textrm{subject to}\nonumber\\
			&\alpha_{x,i}^\top E_k (\ScriptA\mu_0 + \ScriptB V) - \beta_{x,i} + \| \Sigma_y^{1/2}(I+\ScriptB K)^\top E_k^\top \alpha_{x,i}\|\Phi^{-1}(1-p_{x,i}) \leq 0,\label{eq:PrX}\\
			&\alpha_{u,j}^\top F_k V  - \beta_{u,j} + \| \Sigma_y^{1/2} K^\top F_k^\top \alpha_{u,j}\|\Phi^{-1}(1-p_{u,j})\leq 0,\label{eq:PrU}\\
			&\mu_f = E_N(\ScriptA\mu_0 + \ScriptB V),\\
			&\Sigma_f \succeq E_N(I+\ScriptB K)\Sigma_y(I+\ScriptB K)^\top E_N^\top,\label{eq:Sigmaf>ENIplusBKSigmaYIplusBK}
			\end{align}
		\end{subequations}
		for $i = 0,\ldots,N_s - 1$ and $j = 0,\ldots,N_c - 1$, where
		\begin{align*}
			\Sigma_y &= \ScriptA\Sigma_0\ScriptA^\top + \ScriptD\ScriptD^\top,
		\end{align*}
		and
		\begin{align*}
			V = \begin{bmatrix}
			v_0\\\vdots\\v_{N-1}
			\end{bmatrix},\qquad
			K = \begin{bmatrix}
			K_0 & & &  & 0 \\
			& K_1 & & & 0\\
			& & \ddots & & 0\\
			& & & K_{N-1}& 0
			\end{bmatrix},
		\end{align*}
		and where $\Phi^{-1}(\cdot)$ is the inverse function of the cumulative distribution function of the standard normal distribution.
	\end{tcolorbox}
\end{Theorem}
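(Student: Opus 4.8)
The plan is to substitute the proposed disturbance-filter feedback law into the stacked dynamics~(\ref{eq:X=Ax0+BU+DW}), reduce the objective and the constraints to expressions in the first two moments of $X$ and $U$, and then verify that the resulting program is convex. First I would write the controller~(\ref{eq:OCSController}) and the filter~(\ref{eq:yDynamics}) in stacked form. Since the $y$-dynamics coincide with~(\ref{eq:SystemDynamicsOCS}) driven by $W$ alone with initial condition $y_0 = x_0 - \mu_0$, they admit the batch representation $Y = \ScriptA y_0 + \ScriptD W$, and the control law becomes $U = V + KY$ with $K$ the block-diagonal gain defined in the statement. Substituting $x_0 = \mu_0 + y_0$ and $U = V + KY$ into~(\ref{eq:X=Ax0+BU+DW}) and using $\ScriptA y_0 + \ScriptD W = Y$ collapses the state to the affine-in-noise form $X = (\ScriptA\mu_0 + \ScriptB V) + (I + \ScriptB K)Y$. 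This identity is the crux of the argument: because the feedback acts on the filtered disturbance $y_k$ rather than on the state $x_k$ itself, the random vector $Y$ — and hence its covariance $\Sigma_y = \ScriptA\Sigma_0\ScriptA^\top + \ScriptD\ScriptD^\top$, obtained from~(\ref{eq:Ex0x0x0WWW}) — is independent of the decision variables $V$ and $K$.

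With this decoupling in hand, the moments follow immediately: since $\mathbb{E}[Y] = 0$, we get $\mathbb{E}[X] = \ScriptA\mu_0 + \ScriptB V$, $\mathrm{Cov}(X) = (I + \ScriptB K)\Sigma_y (I + \ScriptB K)^\top$, $\mathbb{E}[U] = V$, and $\mathrm{Cov}(U) = K\Sigma_y K^\top$. I would then expand the objective~(\ref{eq:CSObjFuncConverted}) using $\mathbb{E}[Z^\top M Z] = \mathbb{E}[Z]^\top M\,\mathbb{E}[Z] + \trace(M\,\mathrm{Cov}(Z))$ for both the $\bar Q$- and $\bar R$-weighted terms, which reproduces the quadratic-plus-trace cost in~(\ref{prob:OCSInputChanceConstrained}). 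Selecting the terminal block with $E_N$ turns the mean constraint into $\mu_f = E_N(\ScriptA\mu_0 + \ScriptB V)$, and the relaxed covariance constraint~(\ref{eq:CSTermCovConstRelaxed}) into~(\ref{eq:Sigmaf>ENIplusBKSigmaYIplusBK}).

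Next I would convert the chance constraints. Because $X$ and $U$ are affine images of the jointly Gaussian pair $(x_0, W)$, each scalar $\alpha_{x,i}^\top E_k X$ and $\alpha_{u,j}^\top F_k U$ is Gaussian, with means read off from the $\mathbb{E}[X]$, $\mathbb{E}[U]$ above and with standard deviations $\|\Sigma_y^{1/2}(I+\ScriptB K)^\top E_k^\top \alpha_{x,i}\|$ and $\|\Sigma_y^{1/2}K^\top F_k^\top \alpha_{u,j}\|$. Standardizing and using monotonicity of $\Phi$, the requirement $\Pr(\,\cdot \le \beta\,) \ge 1-p$ is equivalent to $\text{mean} + \Phi^{-1}(1-p)\cdot\text{std} \le \beta$, which are exactly~(\ref{eq:PrX}) and~(\ref{eq:PrU}).

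It remains to check convexity. The objective is a convex quadratic in $V$ (as $\bar Q \succeq 0$, $\bar R \succ 0$) plus a trace of a positive-semidefinite quadratic form in $K$, hence jointly convex. Constraints~(\ref{eq:PrX}) and~(\ref{eq:PrU}) are second-order cone constraints: the norm terms are convex in $(V,K)$ and enter with the coefficients $\Phi^{-1}(1-p_{x,i})$, $\Phi^{-1}(1-p_{u,j})$ — here the restriction $p_{x,i},p_{u,j} \in [0,0.5)$ is essential, since it guarantees $1-p > 0.5$ and thus $\Phi^{-1}(1-p) \ge 0$. Finally, the terminal constraint~(\ref{eq:Sigmaf>ENIplusBKSigmaYIplusBK}) is matrix convex in $K$ and can be written, via a Schur complement on $E_N(I+\ScriptB K)\Sigma_y^{1/2}$, as a linear matrix inequality in $K$; I expect this to be the main point requiring care, together with the observation that replacing the original covariance \emph{equality} constraint by the \emph{inequality}~(\ref{eq:CSTermCovConstRelaxed}) is precisely what renders this set convex, since the equality version would not be.
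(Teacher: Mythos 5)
Your proposal is correct and takes essentially the same route as the paper: the decomposition $X = (\ScriptA\mu_0 + \ScriptB V) + (I+\ScriptB K)Y$ with $Y = \ScriptA y_0 + \ScriptD W$ independent of the decision variables, moment matching for the cost and terminal constraints, the $\Phi^{-1}$-based standardization of the Gaussian chance constraints (valid since $p_{x,i}, p_{u,j} < 0.5$), and the Schur-complement LMI for the relaxed terminal covariance. The only difference is one of completeness rather than method: the paper's own proof defers every step except the input chance-constraint conversion~(\ref{eq:PrU}) to its references~\cite{okamoto2018Optimal,okamoto2018Optimalb}, whereas you spell out the full derivation.
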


\begin{proof}
	All steps to convert Problem~(\ref{prob:OptimalCovarianceSteeringConverted}) to Problem~(\ref{prob:OCSInputChanceConstrained}) have already been discussed in our previous work~\cite{okamoto2018Optimal,okamoto2018Optimalb} except for the conversion of~(\ref{eq:CSCCU}) to~(\ref{eq:PrU}), which we describe below. 

To this end, notice that using the control law~(\ref{eq:OCSController}), the control sequence $U$ in~(\ref{eq:XUW}) is represented as
	\begin{align}
		U = V + K Y,
	\end{align}
	where $Y = \begin{bmatrix}y_0^\top & \cdots & y_N^\top \end{bmatrix}^\top \in \Re^{(N+1)n_x}$.
	It follows from~(\ref{eq:yDynamics}) that
	\begin{align}
		Y = \ScriptA y_0 + \ScriptD W,
	\end{align}
	and thus, using the facts that $\Expectation[y_0] = 0$, $\Expectation[y_0 y_0^\top] = \Sigma_0$, and $\Expectation[y_0 W^\top] = 0$, one obtains
	\begin{align}
		\Expectation[Y] = 0,\quad
		\Expectation[YY^\top] = \Sigma_y.
	\end{align}
	Therefore,
	\begin{align}
		\Expectation[U] = V,\quad
		\Expectation[\tilde{U}\tilde{U}^\top] =K\Sigma_y K^\top,
	\end{align}
	where $\tilde{U} = U-\Expectation[U]$.
	The inequality~(\ref{eq:CSCCU}) can be rewritten as
	\begin{align}\label{eq:Prau>b > 1minumsp_us}
		\Pr\left(\alpha_{u,j}^\top F_k (V + KY) \leq \beta_{u,j} \right) \geq 1-p_{u,j}.
	\end{align}
	Notice that $\alpha_{u,j}^\top F_k (V + KY)$ is a Gaussian distributed scalar random variable with mean $\alpha_{u,j}^\top F_kV$ and variance $\alpha_{u,j}^\top F_k K\Sigma_y K^\top F_k^\top \alpha_{u,j}$.
	Thus, inequality~(\ref{eq:Prau>b > 1minumsp_us}) becomes
	\begin{align}
		\Pr\left(\alpha_{u,j}^\top F_k (V + KY) \leq \beta_{u,j} \right) &=
		\frac{1}{\sqrt{2\pi\alpha_{u,j}^\top F_k K\Sigma_y K^\top F_k^\top \alpha_{u,j}}}
		\int_{-\infty}^{\beta_{u,j}}\exp\left(-\frac{(\xi - \alpha_{u,j}^\top F_kV)^2}{2\alpha_{u,j}^\top F_k K\Sigma_y K^\top F_k^\top \alpha_{u,j}}\right)\mathrm{d}\xi,\nonumber \\
		&=\Phi\left(\frac{\beta_{u,j} - \alpha_{u,j}^\top F_kV}{\sqrt{\alpha_{u,j}^\top F_k K\Sigma_y K^\top F_k^\top \alpha_{u,j}}}\right) \geq 1 - p_{u,j}.
	\end{align}
	Using the inverse function of $\Phi(\cdot)$, we obtain
	\begin{align}
	\alpha_{u,j}^\top F_k V - \beta_{u,j} + \sqrt{\alpha_{u,j}^\top F_k K\Sigma_y K^\top F_k^\top \alpha_{u,j}}\Phi^{-1}(1 - p_{u,j}) \leq 0,
	\end{align}
	which can be readily converted to~(\ref{eq:PrU}).
\end{proof}

Since $p_{x, i} \in [0, 0.5)$ and $p_{u, j} \in [0, 0.5)$, the constraints~(\ref{eq:PrX}) and~(\ref{eq:PrU}) are convex. 
Since Problem~(\ref{prob:OCSInputChanceConstrained}) is convex, one can efficiently solve the problem using a convex programming solver.
Specifically, because the terminal covariance constraint~(\ref{eq:Sigmaf>ENIplusBKSigmaYIplusBK}) can be converted to a linear matrix inequality (LMI),
\begin{align}
	\begin{bmatrix}
	\Sigma_f & E_N(I + \ScriptB K)\Sigma_y^{1/2} \\
	\Sigma_y^{1/2}(I + \ScriptB K)^\top E_N^\top & I
	\end{bmatrix} \succeq 0,
\end{align}
a semidefinite programming (SDP) solver such as Mosek~\cite{mosek} can be used.

\section{CS-SMPC Controller Design}    \label{sec:CS_SMPCDesign}

In the previous section, we introduced the optimal covariance steering controller.
We are now ready to discuss the main result of this paper, namely the CS-SMPC algorithm, followed by a proof of recursive feasibility and guaranteed stability.

\subsection{CS-SMPC Formulation}

In this section, we solve Problem~(\ref{prob:InfiniteHorizonOptimalControlProblem}) approximately by solving Problem~(\ref{prob:FiniteHorizonOptimalControlProblem}) at each time step in a receding horizon manner.
Specifically, at time step $k$, we wish to solve the following \emph{finite} horizon stochastic optimal control problem.
\begin{tcolorbox}[width = \columnwidth, colback = white, arc = 0pt]\vspace{-15pt}
	\begin{subequations}\label{prob:SMPC_Formulation}
		\begin{align}
			\min_{u_{k|k},u_{k+1|k},\ldots, u_{k+N-1|k}} &J_{N}(x_k; u_{k|k},u_{k+1|k},\ldots, u_{k+N-1|k}) =  \nonumber \\
			&\mathbb{E}_k\left[\sum_{t=k}^{k+N-1}x_{t|k}^\top Q x_{t|k} + u_{t|k}^\top R u_{t|k}\right]  + \mathbb{E}_k[x_{k+N|k}]^\top P_{\rm mean} \mathbb{E}_k[x_{k+N|k}], \label{eq:SMPC_Cost}\\
			\textrm{subject to }&\nonumber \\
			& x_{t+1|k} = Ax_{t|k} + Bu_{t|k} + D w_t,\quad x_{k|k} = x_k \sim \mathcal{N}(\mu_k,\Sigma_k),\\
			& \Pr_k\left(\alpha_{x,i}^\top x_{t|k} \leq \beta_{x,i} \right) \geq 1 - p_{x,i},\ i = 0,\ldots, N_s-1,\label{eq:SMPC_stateChanceConstraints} \\
			& \Pr_k\left(\alpha_{u,j}^\top u_{t|k} \leq \beta_{u,j} \right) \geq 1 - p_{u,j},\ j = 0,\ldots, N_c-1, \label{eq:SMPC_inputChanceConstraints} \\
			& \Expectation_k\left[x_{k+N|k}\right] \in \mathcal{X}^\mu_f,\label{eq:SMPC_meanTermConst}\\
			& \Expectation_k\left[(x_{k+N|k}-\Expectation[x_{k+N|k}])(x_{k+N|k}-\Expectation[x_{k+N|k}])^\top\right] \preceq \Sigma_f,\label{eq:SMPC_covTermConst}
		\end{align}
	\end{subequations}
	where $\mu_k\in\Re^{n_x}$, $\Sigma_k \in \Re^{n_x\times n_x}$, $P_{\rm mean} \in \Re^{n_x \times n_x}$, $\mathcal{X}^\mu_f \subset \Re^{n_x}$, and $\Sigma_f \in \Re^{n_x \times n_x}$ are given. 
\end{tcolorbox}

Problem~(\ref{prob:SMPC_Formulation}) is illustrated in Fig.~\ref{fig:SchematicCSSMPC}.
At each time step $k$, the \textit{predicted} system state and the  \textit{predicted} control have to satisfy the constraints.
In addition, at the end of the optimization horizon, the state mean has to be in a set $\mathcal{X}_f^\mu$, denoted by the yellow polytope, and the system covariance has to be smaller than $\Sigma_f$, denoted by the yellow ellipse in Fig.~\ref{fig:SchematicCSSMPC}.

Problem~(\ref{prob:SMPC_Formulation}) results from Problem (\ref{prob:FiniteHorizonOptimalControlProblem}) by setting
\begin{align}
	J_f(x) &= \mathbb{E}_k[x]^\top P_{\rm mean} \mathbb{E}_k[x],\label{eq:terminalCost}
\end{align}
along with the state terminal constraints~(\ref{eq:SMPC_meanTermConst}) and (\ref{eq:SMPC_covTermConst}).
Adding terminal constraints is a common methodology to ensure recursive feasibility and stability for MPC problems~\cite{mayne2000constrained,farina2016stochastic,mesbah2016stochastic}.
In this section, we show that, by properly designing the initial state mean and covariance pair along with the terminal parameters of Problem~(\ref{prob:SMPC_Formulation}), i.e.,  $\mu_k$, $\Sigma_k$, $\mathcal{X}^\mu_f$, $\Sigma_f$, and $P_{\rm mean}$, we can achieve recursive feasibility and guaranteed stability.

\begin{figure}
	\centering
	\includegraphics[width=0.7\columnwidth]{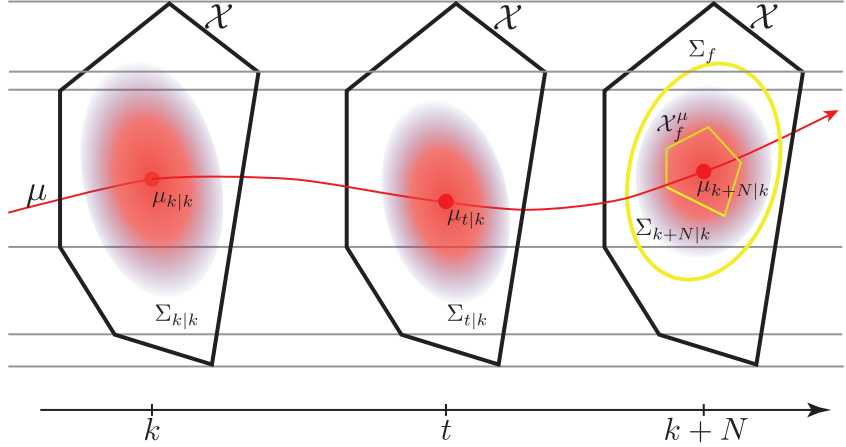}
	\caption{A schematic describing the proposed CS-SMPC approach. \label{fig:SchematicCSSMPC}}
\end{figure}

In order to solve problem~(\ref{prob:SMPC_Formulation}), similarly to Section~\ref{sec:Preliminaries}, we need the 
following additional assumption.

\begin{Assumption}	\label{remark:CSSMPCAssumption}
	All control channels are corrupted by noise, that is,
	$\mathcal{R}(B) \subseteq \mathcal{R}(D)$.
	This assumption, along with Assumption~\ref{remark:CSAssumption_}, 
	ensures that the pair $(A+BK,D)$ is controllable for any matrix $K$.
\end{Assumption}

We start with the following theorem that converts Problem~(\ref{prob:SMPC_Formulation}) to a more convenient form.

\begin{Theorem}
	Given $\mu_k$, $\Sigma_k$, $\mathcal{X}^\mu_f$, $\Sigma_f \succ 0$, and $P_{\rm mean} \succ 0$, and using the following control law
	\begin{subequations}    \label{eq:CSMPCControllerAll}
		\begin{align} \label{eq:CSMPCController}
			u_{t|k} &= v_{t|k} + K_{t|k} y_{t|k},
		\end{align}
		where $v_{t|k} \in \Re^{n_u}$, $K_{t|k} \in \Re^{n_u \times n_x}$, and $y_{t|k}\in \Re^{n_x}$ from
			\begin{align}
				y_{t+1|k} &= A y_{t|k} + D w_t,\\
				y_{k|k} &= x_{k|k} - \mu_{k|k},
			\end{align}
	\end{subequations}
	for $t=k,\ldots,k+N-1$, Problem~(\ref{prob:SMPC_Formulation}) can be cast as a convex programming problem as follows.
	\begin{tcolorbox}[width = \columnwidth, colback = white, arc = 0pt]\vspace{-15pt}
		\begin{subequations}\label{prob:SMPC_OCS}
			\begin{align}
			\min_{V, K}	J_{N}(\mu_{k}, \Sigma_{k}&; V, K) = \trace\left[\left((I+\ScriptB K)^\top\bar{Q}_{P,{\rm cov}}(I+\ScriptB K)+K^\top\bar{R}K\right)\Sigma_{y}\right] \nonumber \\
			& \qquad + (\ScriptA\mu_{k|k} + \ScriptB V)^\top \bar{Q}_{P,{\rm mean}} (\ScriptA\mu_{k|k} + \ScriptB V) + V^\top \bar{R} V.\\
			\textrm{subject to}&\nonumber \\
			&\alpha_{x,i}^\top E_{t-k}\left(\ScriptA \mu_{k|k} + \ScriptB V\right)+ \| \Sigma_{y}^{1/2}(I+\ScriptB K)^\top E_{t-k}^\top \alpha_{x,i}\|\Phi^{-1}(1-p_{x,i}) - \beta_{x,i} \leq 0,\label{eq:CS-SMPC_stateChanceConstraints} \\
			&\alpha_{u,j}^\top F_{t-k} V+ \|\Sigma_{y}^{1/2}K^\top F_{t-k}^\top\alpha_{u,j}\| \Phi^{-1}\left(1-p_{u,j}\right) - \beta_{u,j} \leq 0,\label{eq:CS-SMPC_inputChanceConstraints}\\
			&E_N\left(\ScriptA \mu_{k|k} + \ScriptB V\right) \in \mathcal{X}^\mu_f,\label{eq:CS-SMPC_meanTerminalConstraints}\\
			&\Sigma_f \succeq E_N(I+\ScriptB K)\Sigma_{y}(I+\ScriptB K)^\top E_N^\top,\label{eq:CS-SMPC_covTerminalConstraints}
			\end{align}
		\end{subequations}
		for all $i = 0,\ldots, N_s - 1$, $j = 0,\ldots, N_c - 1$, and $t = k, \ldots, k+N-1$,
		where
		\begin{subequations}\label{prob:SMPC_OCSe}
		\begin{align}
			& \mu_{k|k} = \mu_{k}, \qquad \Sigma_{k|k} = \Sigma_{k},\qquad \Sigma_y = \ScriptA\Sigma_{k|k}\ScriptA^\top + \ScriptD\ScriptD^\top,\\
			&V = \begin{bmatrix}
				v_{k|k}\\\vdots\\v_{k+N-1|k}
			\end{bmatrix},\qquad
			K = \begin{bmatrix}
			K_{k|k} & & &  & 0 \\
			& K_{k+1|k} & & & 0\\
			& & \ddots & & 0\\
			& & & K_{k+N-1|k}& 0
			\end{bmatrix},   \label{eq:Kmat}  \\
			&\bar{Q}_{P, {\rm mean}} =
			\begin{bmatrix}
			Q & & &  \\
			& \ddots  & \\
			& & Q & \\
			& & & P_{\rm mean}
			\end{bmatrix},\quad
			\bar{Q}_{P, {\rm cov}} =
			\begin{bmatrix}
			Q & & &  \\
			& \ddots  & \\
			& & Q & \\
			& & & 0
			\end{bmatrix},\quad
			\bar{R} =
			\begin{bmatrix}
			R & &   \\
			& \ddots &\\
			& & R
			\end{bmatrix}.
		\end{align}
		\end{subequations}
	\end{tcolorbox}
\end{Theorem}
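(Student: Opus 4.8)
The plan is to mirror the derivation of Theorem~\ref{theorem:OCS}, since Problem~(\ref{prob:SMPC_Formulation}) differs from the covariance steering problem~(\ref{prob:OptimalCovarianceSteering}) only through the mean-dependent terminal cost and the relaxation of the terminal mean equality to the set inclusion~(\ref{eq:SMPC_meanTermConst}). First I would write the predicted trajectory in batch form $X = \ScriptA x_{k|k} + \ScriptB U + \ScriptD W$ (with $X$ stacking $x_{k|k},\ldots,x_{k+N|k}$ and $U$ stacking the horizon controls), and stack the filtered disturbance into $Y = \ScriptA y_{k|k} + \ScriptD W$. Substituting the control law~(\ref{eq:CSMPCControllerAll}) as $U = V + KY$ and using $x_{k|k} = \mu_{k|k} + y_{k|k}$ together with the cancellation $\ScriptA y_{k|k} + \ScriptD W = Y$ yields the key structural identity
\[
X = (\ScriptA\mu_{k|k} + \ScriptB V) + (I + \ScriptB K)Y.
\]
Because $\Expectation_k[y_{k|k}] = 0$, $\Expectation_k[y_{k|k}y_{k|k}^\top] = \Sigma_{k|k}$, and $\Expectation_k[y_{k|k}W^\top] = 0$, one obtains $\Expectation_k[Y] = 0$ and $\Expectation_k[YY^\top] = \Sigma_y$, hence $\Expectation_k[X] = \ScriptA\mu_{k|k} + \ScriptB V$ and the centered state $\tilde X = (I + \ScriptB K)Y$ has covariance $(I + \ScriptB K)\Sigma_y(I + \ScriptB K)^\top$.

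With these moments in hand, the four constraints follow exactly as in Theorem~\ref{theorem:OCS}. The chance constraints~(\ref{eq:SMPC_stateChanceConstraints}) and~(\ref{eq:SMPC_inputChanceConstraints}) become scalar Gaussian tail bounds: $\alpha_{x,i}^\top E_{t-k}X$ and $\alpha_{u,j}^\top F_{t-k}U$ are Gaussian with means $\alpha_{x,i}^\top E_{t-k}(\ScriptA\mu_{k|k} + \ScriptB V)$ and $\alpha_{u,j}^\top F_{t-k}V$ and standard deviations $\|\Sigma_y^{1/2}(I + \ScriptB K)^\top E_{t-k}^\top\alpha_{x,i}\|$ and $\|\Sigma_y^{1/2}K^\top F_{t-k}^\top\alpha_{u,j}\|$, so applying $\Phi^{-1}$ as in the proof of Theorem~\ref{theorem:OCS} reproduces~(\ref{eq:CS-SMPC_stateChanceConstraints}) and~(\ref{eq:CS-SMPC_inputChanceConstraints}), with convexity guaranteed by $p_{x,i},p_{u,j}\in[0,0.5)$. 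The terminal mean constraint is immediate from $\Expectation_k[x_{k+N|k}] = E_N\Expectation_k[X] = E_N(\ScriptA\mu_{k|k} + \ScriptB V)$, giving~(\ref{eq:CS-SMPC_meanTerminalConstraints}), and the terminal covariance constraint follows from $E_N\Expectation_k[\tilde X\tilde X^\top]E_N^\top = E_N(I + \ScriptB K)\Sigma_y(I + \ScriptB K)^\top E_N^\top$, giving~(\ref{eq:CS-SMPC_covTerminalConstraints}).

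The one genuinely new step is the cost, because the terminal penalty acts on the terminal \emph{mean} only rather than on the full second moment. The plan is to split $J_N$ via $X = \Expectation_k[X] + \tilde X$, so that $\Expectation_k[X^\top\bar{Q}_{P,{\rm cov}}X] = (\ScriptA\mu_{k|k}+\ScriptB V)^\top\bar{Q}_{P,{\rm cov}}(\ScriptA\mu_{k|k}+\ScriptB V) + \trace[\bar{Q}_{P,{\rm cov}}(I+\ScriptB K)\Sigma_y(I+\ScriptB K)^\top]$ and $\Expectation_k[U^\top\bar{R}U] = V^\top\bar{R}V + \trace[\bar{R}K\Sigma_y K^\top]$, with cross terms vanishing since $\Expectation_k[Y]=0$. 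I would then absorb the terminal mean penalty $E_N(\ScriptA\mu_{k|k}+\ScriptB V)$ into the mean quadratic using the block identity $\bar{Q}_{P,{\rm mean}} = \bar{Q}_{P,{\rm cov}} + E_N^\top P_{\rm mean}E_N$, collapsing the three mean-dependent quadratics to $(\ScriptA\mu_{k|k}+\ScriptB V)^\top\bar{Q}_{P,{\rm mean}}(\ScriptA\mu_{k|k}+\ScriptB V)$, while the two trace terms combine by the cyclic property into $\trace[((I+\ScriptB K)^\top\bar{Q}_{P,{\rm cov}}(I+\ScriptB K)+K^\top\bar{R}K)\Sigma_y]$, reproducing the objective of Problem~(\ref{prob:SMPC_OCS}). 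The main obstacle is purely bookkeeping: keeping the mean-only terminal cost separated from the covariance channel throughout (hence the two distinct weightings $\bar{Q}_{P,{\rm mean}}$ and $\bar{Q}_{P,{\rm cov}}$) and verifying that the filtered-disturbance feedback decouples the mean and covariance dynamics, which is precisely what the decomposition $X = (\ScriptA\mu_{k|k}+\ScriptB V) + (I+\ScriptB K)Y$ guarantees.
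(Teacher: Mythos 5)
Your proposal is correct and follows exactly the route the paper intends: the paper's own proof is omitted with a pointer to the derivation of Theorem~\ref{theorem:OCS}, and your argument is precisely that derivation adapted to the receding-horizon setting. The one genuinely new ingredient you identify—splitting the mean and covariance channels via $\bar{Q}_{P,{\rm mean}} = \bar{Q}_{P,{\rm cov}} + E_N^\top P_{\rm mean}E_N$ so the mean-only terminal penalty is absorbed into the mean quadratic while the trace term keeps $\bar{Q}_{P,{\rm cov}}$—is handled correctly and is exactly what makes the stated objective come out right.
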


\begin{proof}
	The proof follows directly from the discussion in Section~\ref{sec:Preliminaries} and thus is omitted.
\end{proof}

As discussed in Section~\ref{sec:Preliminaries}, Problem~(\ref{prob:SMPC_OCS}) can be efficiently solved using an SDP solver.
The remaining issue is whether Problem~(\ref{prob:SMPC_OCS}) is always feasible or not.
To this end, we need to impose proper initial and terminal conditions. 

\subsubsection{Initialization}    \label{sec:init}

In order to proceed we impose the following additional assumption.
\begin{Assumption}
	At time step $k = 0$, Problem~(\ref{prob:SMPC_OCS}) is feasible subject to the initial condition~(\ref{eq:x0}).
\end{Assumption}

Notice that, as we assume perfect state measurement, the value of $x_0$ is available.
Since we have unbounded additive noise, the state may become unbounded as well and
Problem~(\ref{prob:SMPC_OCS}) can become infeasible if we always set 
$\mu_k = x_k$ and $\Sigma_k = 0$.
In order to keep Problem~(\ref{prob:SMPC_OCS}) feasible for all time steps $k \geq 1$, several approaches have been proposed~\cite{farina2013probabilistic,paulson2017stochastic, hewing2018stochastic}.
In this work, we follow an initialization strategy similar to the one in~\cite{farina2013probabilistic}. 
Specifically, we set $\mu_k$ and $\Sigma_k$ as follows
\begin{equation} \label{initcond1}
\mu_k = x_k, \qquad \Sigma_k = 0
\end{equation}
and solve (\ref{prob:SMPC_OCS}).
If the problem is not feasible, we set
\begin{equation}  \label{initcond2}
 \mu_k = \mu^\ast_{k|k-1}, \qquad \Sigma_k = \Sigma^\ast_{k|k-1},
\end{equation}
which are the optimal \emph{predicted} mean and covariance from the previous time step.
Notice that  (\ref{initcond1}) is the result of closing the loop with the current measurement,
while the choice (\ref{initcond2}) does not use the most recent measurement and thus it can be treated as open-loop control.
Instead of 
\begin{equation} 
\Pr_k\left(\alpha_{x,i}^\top x_{t|k} \leq \beta_{x,i} \right) \geq 1 - p_{x,i},\quad i = 0,\ldots, N_s-1,~~ t = k+1,\ldots,k+N-1,
\end{equation} 
it enforces the constraint
\begin{equation} 
\Pr_k\left(\alpha_{x,i}^\top x_{k+1|k} \leq \beta_{x,i} \right) \geq 1 - p_{x,i},\quad i = 0,\ldots, N_s-1, 
\end{equation} 
for all $k=0,1,\ldots$
at the expense of suboptimal results in terms of cost function minimization~\cite{farina2016stochastic}.

Finally, in order to ensure feasibility we need to properly design the terminal constraints.
The idea of imposing the maximal terminal state covariance value in~(\ref{eq:SMPC_covTermConst}) has been 
proposed in~\cite{farina2013probabilistic}, in which the authors defined $\Sigma_f \succ 0$ as the steady-state solution of the following discrete-time Lyapunov equation
\begin{equation}\label{eq:FarinaTerminalCond}
	\Sigma_f = (A+BK_{\rm LQR})\Sigma_f (A+BK_{\rm LQR})^\top + DD^\top,
\end{equation}
where $K_{\rm LQR}$ is the infinite-horizon LQR gain.
Since $K_{\rm LQR}$ is determined from the $Q$ and $R$ matrices in~(\ref{eq:SMPC_Cost}), this approach results in $\Sigma_f$ being 
an implicit function of $Q$ and $R$ and not of the terminal penalty.
%Thus, it is difficult to tune the matrix $\Sigma_f$ except by trial and error.
Thus, one needs to compromise performance in order to guarantee stability.
Alternatively, one has to ignore the stability of the controlled system in order to achieve the desired performance. 
The approach we propose in this work utilizes the results from covariance assignment theory to allow 
$\Sigma_f$ to be chosen independently from the $Q$ and $R$ matrices, as long as $\Sigma_f$ is \emph{assignable} at the end of the horizon.
Thus, one can design the $Q$ and $R$ matrices based on the desired system behavior, while ensuring guaranteed stability of the controlled system.

\subsubsection{Covariance Assignment Theory}

Before detailing how to choose $\mathcal{X}^\mu_f$, $\Sigma_f$, and $P_{\rm mean}$ in (\ref{prob:SMPC_Formulation}), we introduce the following results from 
covariance assignment theory~\cite{collins1987theory,grigoriadis1997minimum}.
Additional details can be found in~\cite{Skelton:book98}. 

\begin{Definition}[Assignable Covariance]
	The state covariance $\Sigma\succ 0$ is \emph{assignable} to the closed-loop system
	\begin{equation} \label{eq:closedLoopSystem}
		x_{k+1} = (A+B\tilde{K})x_k + D w_k,
	\end{equation}
	if $\Sigma$ satisfies
	\begin{equation}  \label{eq:closedLoopSteadyState}
		\Sigma = (A+B\tilde{K})\Sigma(A+B\tilde{K})^\top + DD^\top,
	\end{equation}
	where $\tilde{K} \in \Re^{n_u \times n_x}$ is a state-feedback gain. 	
\end{Definition}

It follows from this definition that the matrix $\Sigma_f$ in~(\ref{eq:FarinaTerminalCond}) is an assignable covariance with 
corresponding state-feedback gain $K_{\rm LQR}$.
Note also that since $\Sigma\succ 0$ it follows from (\ref{eq:closedLoopSteadyState}) and the fact that the pair $(A+B\tilde{K},D)$ is 
controllable (see Assumption~\ref{remark:CSSMPCAssumption})
that the matrix $A+B\tilde{K}$ is Hurwitz and hence  $\tilde{K}$ is stabilizing.

The assignable set of matrices $\Sigma$ and the corresponding stabilizing gain matrices $\tilde{K}$ can be computed as follows.

\begin{Proposition}[\cite{collins1987theory}]\label{theorem:AssignableSigma}
	The set of assignable state covariances $\Sigma$ can be parameterized by the following set of LMIs
	\begin{subequations}\label{eq:AssignableConds}
		\begin{align}
		(I - BB^+)(\Sigma - A\Sigma A^\top &- DD^\top)(I - BB^+) = 0,\\
		\Sigma &\succ 0,\\
		\Sigma &\succeq DD^\top,
		\end{align}
	\end{subequations}
	where $B^+$ denotes the Moore-Penrose pseudoinverse of $B$.
\end{Proposition}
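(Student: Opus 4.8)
The plan is to prove the stated characterization as a two-sided equivalence: a positive definite $\Sigma$ is assignable, in the sense of the preceding definition, if and only if it satisfies the three conditions in~(\ref{eq:AssignableConds}). The starting point is to treat the assignability relation~(\ref{eq:closedLoopSteadyState}) as an algebraic equation to be solved for the gain $\tilde{K}$, and to expand it into the form
\begin{equation*}
M := \Sigma - A\Sigma A^\top - DD^\top = A\Sigma\tilde{K}^\top B^\top + B\tilde{K}\Sigma A^\top + B\tilde{K}\Sigma\tilde{K}^\top B^\top,
\end{equation*}
so that $\Sigma$ is assignable precisely when this equation admits a solution $\tilde{K}$. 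Throughout I would write $\Pi := I - BB^+$ for the orthogonal projector onto $\mathcal{R}(B)^\perp$, using the Moore--Penrose identities $BB^+B = B$ and $(BB^+)^\top = BB^+$, which give $\Pi B = 0$, $B^\top\Pi = 0$, and $\Pi^\top = \Pi = \Pi^2$.

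For the necessity direction I would assume such a $\tilde{K}$ exists. Positive definiteness $\Sigma \succ 0$ is part of the definition, and rewriting~(\ref{eq:closedLoopSteadyState}) as $\Sigma - DD^\top = (A+B\tilde{K})\Sigma(A+B\tilde{K})^\top \succeq 0$ immediately yields $\Sigma \succeq DD^\top$. The projection equality then follows by pre- and post-multiplying the displayed equation for $M$ by $\Pi$: since every term on the right-hand side carries a leading factor $B$ or a trailing factor $B^\top$, the identities $\Pi B = 0$ and $B^\top\Pi = 0$ annihilate all three terms, leaving $\Pi M \Pi = 0$, which is exactly the first condition.

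The sufficiency direction is where the real work lies. Assuming the three conditions, I would exploit $\Sigma \succ 0$ to form the invertible symmetric square root $\Sigma^{1/2}$, and set $N := \Sigma - DD^\top \succeq 0$ with symmetric positive semidefinite square root $N^{1/2}$. Writing $A_{\rm c} = A + B\tilde{K}$, solving $A_{\rm c}\Sigma A_{\rm c}^\top = N$ is equivalent to finding $A_{\rm c}$ with $(A_{\rm c}\Sigma^{1/2})(A_{\rm c}\Sigma^{1/2})^\top = N$, and the natural candidate is $A_{\rm c} = N^{1/2}\Theta\Sigma^{-1/2}$ for a suitable orthogonal $\Theta$, since then $A_{\rm c}\Sigma A_{\rm c}^\top = N^{1/2}\Theta\Theta^\top N^{1/2} = N$. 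The gain would then be recovered as $\tilde{K} = B^+\bigl(N^{1/2}\Theta\Sigma^{-1/2} - A\bigr)$, which genuinely satisfies $B\tilde{K} = A_{\rm c} - A$ provided $\Pi\bigl(N^{1/2}\Theta\Sigma^{-1/2} - A\bigr) = 0$, i.e. provided $N^{1/2}\Theta\Sigma^{-1/2} - A$ lies columnwise in $\mathcal{R}(B)$.

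The main obstacle, and the crux of the argument, is to produce an orthogonal $\Theta$ for which this range condition holds. Multiplying the requirement $\Pi\bigl(N^{1/2}\Theta\Sigma^{-1/2} - A\bigr)=0$ on the right by $\Sigma^{1/2}$ reduces it to $\Pi N^{1/2}\Theta = \Pi A\Sigma^{1/2}$. I would then observe that the first condition $\Pi M \Pi = 0$, combined with $M = N - A\Sigma A^\top$ and the symmetry of $\Pi$, $N^{1/2}$, and $\Sigma^{1/2}$, is exactly the statement
\begin{equation*}
(\Pi N^{1/2})(\Pi N^{1/2})^\top = (\Pi A\Sigma^{1/2})(\Pi A\Sigma^{1/2})^\top.
\end{equation*}
The existence of the required $\Theta$ then rests on the standard matrix fact that $XX^\top = YY^\top$ implies $Y = X\Theta$ for some orthogonal $\Theta$ (provable via the singular value decomposition, or by extending a partial isometry between the relevant column spaces to a full orthogonal map); applying it with $X = \Pi N^{1/2}$ and $Y = \Pi A\Sigma^{1/2}$ furnishes an orthogonal $\Theta$ with $\Pi N^{1/2}\Theta = \Pi A\Sigma^{1/2}$. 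With this $\Theta$ the candidate gain is well defined, $BB^+\bigl(N^{1/2}\Theta\Sigma^{-1/2}-A\bigr) = N^{1/2}\Theta\Sigma^{-1/2}-A$, and the computation above verifies that $\Sigma = A_{\rm c}\Sigma A_{\rm c}^\top + DD^\top$, completing the proof. I expect verifying this orthogonal-completion lemma and tracking the projector algebra to be the only delicate points; the remaining manipulations are routine.
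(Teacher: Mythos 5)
Your proof is correct; note, however, that the paper itself contains \emph{no} proof of this proposition --- it is quoted directly from the cited reference~\cite{collins1987theory} --- so your argument fills a gap rather than paralleling an in-paper derivation. Both directions hold up: necessity follows exactly as you argue (positive semidefiniteness of $(A+B\tilde{K})\Sigma(A+B\tilde{K})^\top = \Sigma - DD^\top$, and the projector $\Pi = I-BB^+$ annihilating every term of $\Sigma - A\Sigma A^\top - DD^\top$ that carries a leading $B$ or trailing $B^\top$), and your sufficiency reduction to the orthogonal-completion lemma is sound; that lemma is indeed standard, e.g.\ by polar decomposition: writing $X = (XX^\top)^{1/2}W_X$ and $Y = (YY^\top)^{1/2}W_Y$ with $W_X, W_Y$ orthogonal, the hypothesis $XX^\top = YY^\top$ gives $Y = XW_X^\top W_Y$, so $\Theta = W_X^\top W_Y$ works. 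The instructive comparison is with Proposition~\ref{theorem:Kdesign} (also quoted from~\cite{collins1987theory}), which is the constructive counterpart of your existence argument: with $SS^\top = \Sigma$ playing the role of your $\Sigma^{1/2}$, the fact that the two SVDs in~(\ref{eq:ComputeSVD}) share the common left factor $L\Lambda$ is precisely your identity $(\Pi N^{1/2})(\Pi N^{1/2})^\top = (\Pi A\Sigma^{1/2})(\Pi A\Sigma^{1/2})^\top$, and the orthogonal matrix $G_1\,\mathtt{blkdiag}(I_r,T)\,G_2^\top$ appearing in~(\ref{eq:KtildeFormula}) is an explicit realization of your abstract $\Theta$: since $\Lambda\,\mathtt{blkdiag}(I_r,T) = \Lambda$, one gets $\Pi N^{1/2}G_1\,\mathtt{blkdiag}(I_r,T)\,G_2^\top = L\Lambda G_2^\top = \Pi A S$, which is exactly your range condition. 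What your route buys is a short, self-contained proof of the characterization~(\ref{eq:AssignableConds}); what the SVD route buys --- and what the paper actually needs downstream, e.g.\ in~(\ref{eq:Ktilde}) --- is the explicit parameterization of \emph{all} admissible gains $\tilde{K}$ (via the free orthogonal block $T$ and free matrix $Z$), not merely the existence of one.
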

\begin{Proposition}[\cite{collins1987theory}]\label{theorem:Kdesign}
	Let $\Sigma \succ 0$ be an assignable covariance matrix. 
	Then all (stabilizing) state-feedback gains $\tilde{K}$ that satisfy~(\ref{eq:closedLoopSteadyState}) are parametrized by
	\begin{align}\label{eq:KtildeFormula}
		\tilde{K} = B^+\left((\Sigma - DD^\top)^{1/2}G_1
		\begin{bmatrix}
			I_r & 0 \\ 0 & T
		\end{bmatrix}
		G_2^\top S^{-1} - A\right)
		+ (I_{n_u} - B^+B)Z,
	\end{align}
	where $T$ is an arbitrary orthogonal matrix, $SS^\top = \Sigma$, $Z \in \Re^{n_u\times n_x}$ is an arbitrary matrix, and $G_1$ and $G_2$ are defined from the singular-value decompositions
	\begin{subequations}\label{eq:ComputeSVD}
		\begin{align}
			(I - BB^+)(\Sigma - DD^\top)^{1/2} &= L \Lambda G_1^\top,\\
			(I - BB^+)AS &= L \Lambda G_2^\top,
		\end{align}
	\end{subequations}
	where $L$, $G_1$, and $G_2$ are orthogonal matrices, and $\Lambda = \mathtt{diag}(\sigma_1,\ldots,\sigma_{r},0,\ldots,0)$ with $\sigma_1 \geq \sigma_2 \geq\ldots\geq \sigma_r>0$.
\end{Proposition}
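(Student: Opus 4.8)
The plan is to convert the quadratic matrix equation~(\ref{eq:closedLoopSteadyState}) into a linear equation for $\tilde{K}$, solve that with a pseudoinverse, and then characterize the residual orthogonal freedom. Writing $A_{\rm cl} = A + B\tilde{K}$ and factoring $\Sigma = SS^\top$ with $S$ invertible (possible since $\Sigma \succ 0$), equation~(\ref{eq:closedLoopSteadyState}) becomes $\Sigma - DD^\top = (A_{\rm cl}S)(A_{\rm cl}S)^\top$. Since $\Sigma - DD^\top \succeq 0$ admits a symmetric square root, both sides are Gram products of square matrices of equal size, so they differ by an orthogonal factor: there exists an orthogonal $V$ with $A_{\rm cl}S = (\Sigma - DD^\top)^{1/2}V$, i.e. $A + B\tilde{K} = (\Sigma - DD^\top)^{1/2}V S^{-1}$.

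Next I would treat this as the linear equation $B\tilde{K} = M$ with $M = (\Sigma - DD^\top)^{1/2}VS^{-1} - A$. Standard pseudoinverse theory says it is solvable if and only if $(I - BB^+)M = 0$, and then the full solution set is $\tilde{K} = B^+M + (I_{n_u} - B^+B)Z$ with $Z$ arbitrary, the last term sweeping out the null space of $B$; this already gives the outer shape of~(\ref{eq:KtildeFormula}). What remains is to pin down the admissible $V$. Multiplying the consistency condition on the right by the invertible $S$ yields $(I-BB^+)(\Sigma - DD^\top)^{1/2}V = (I-BB^+)AS$. Here the assignability identity of Proposition~\ref{theorem:AssignableSigma} is essential: writing $P_\perp = I - BB^+$ and using $A\Sigma A^\top = (AS)(AS)^\top$, that identity reads $[P_\perp(\Sigma - DD^\top)^{1/2}][P_\perp(\Sigma - DD^\top)^{1/2}]^\top = (P_\perp AS)(P_\perp AS)^\top$, which is exactly the equal-Gram condition guaranteeing that the two matrices in~(\ref{eq:ComputeSVD}) can be given a \emph{common} left factor $L$ and singular-value matrix $\Lambda$. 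Substituting the two SVDs and cancelling the orthogonal $L$ collapses the consistency condition to $\Lambda G_1^\top V = \Lambda G_2^\top$.

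The final step is to solve $\Lambda G_1^\top V = \Lambda G_2^\top$ over orthogonal $V$. Reparametrizing $V = G_1 \tilde{V} G_2^\top$ (with $\tilde{V}$ orthogonal) reduces it to $\Lambda \tilde{V} = \Lambda$. Partitioning $\tilde{V}$ conformally with $\Lambda = \mathtt{diag}(\Lambda_r, 0)$, $\Lambda_r = \mathtt{diag}(\sigma_1,\ldots,\sigma_r)$, invertibility of $\Lambda_r$ forces the first block-row of $\tilde{V}$ to be $[\,I_r\ \ 0\,]$; orthogonality of $\tilde{V}$ then forces the first block-column to be $[\,I_r\ \ 0\,]^\top$, leaving an arbitrary orthogonal matrix $T$ in the trailing block, so $\tilde{V} = \mathtt{blkdiag}(I_r, T)$. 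Hence $V = G_1\,\mathtt{blkdiag}(I_r, T)\,G_2^\top$, and substituting into $\tilde{K} = B^+M + (I_{n_u}-B^+B)Z$ reproduces~(\ref{eq:KtildeFormula}) exactly. Stability of the resulting $\tilde{K}$ is not part of this computation; it follows separately from $\Sigma \succ 0$ and controllability of $(A+B\tilde{K}, D)$, as already noted after the definition of assignable covariance.

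I expect the main obstacle to be this last extraction of the orthogonal freedom together with the justification that the SVDs in~(\ref{eq:ComputeSVD}) legitimately share $L$ and $\Lambda$ — this is not automatic and rests entirely on the assignability Gram identity. A secondary technical point is the possibly rank-deficient case $\Sigma - DD^\top \succeq 0$ (as opposed to $\succ 0$): the orthogonal factor $V$ still exists for square matrices with equal Gram product, but confirming that \emph{every} admissible gain is captured is cleanest when phrased through full SVDs rather than symmetric square roots, the structure of the argument being unchanged.
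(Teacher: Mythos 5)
The paper itself contains no proof of this proposition---it is imported verbatim from \cite{collins1987theory}---so the only meaningful comparison is with that reference, whose argument your reconstruction essentially reproduces. Your chain of reasoning is correct and complete: the Gram-factor equivalence ($XX^\top=YY^\top$ for equal-size square matrices iff $Y=XV$ with $V$ orthogonal) turns~(\ref{eq:closedLoopSteadyState}) into the linear system $B\tilde{K}=(\Sigma-DD^\top)^{1/2}VS^{-1}-A$, pseudoinverse theory supplies both the $B^+(\cdot)$ term and the $(I_{n_u}-B^+B)Z$ freedom, the assignability identity~(\ref{eq:AssignableConds}) of Proposition~\ref{theorem:AssignableSigma} is precisely what justifies the shared $L$ and $\Lambda$ in the two SVDs of~(\ref{eq:ComputeSVD}), and the block analysis of $\Lambda\tilde{V}=\Lambda$ over orthogonal $\tilde{V}$ correctly forces $\tilde{V}=\mathtt{blkdiag}(I_r,T)$. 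Your decision to treat stability as a separate consequence of $\Sigma\succ 0$ and controllability of $(A+B\tilde{K},D)$, rather than part of the parametrization, also matches how the paper handles it in the remark following the definition of assignable covariance.
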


\begin{Remark}
	It is worth noticing~\cite{collins1987theory} that, if $A$ is nonsingular and $B$ is full column rank, then the rank $r$ in~(\ref{eq:ComputeSVD}) is  $r = n_x - n_u$. 
	In addition, $I_{n_u} - B^+B = 0$, and thus the second term in~(\ref{eq:KtildeFormula}) vanishes.
\end{Remark}

\subsubsection{Terminal Cost and Terminal Constraints Design}

We are now ready to prove that, by properly designing the terminal cost and terminal constraints ($\mathcal{X}^\mu_f$, $\Sigma_f$, and $P_{\rm mean}$), one can guarantee that the Problem~(\ref{prob:SMPC_OCS}) with initial condition (\ref{initcond2}) is feasible and results in feasible control commands.
In addition, we show that the stability of the closed-loop system with the proposed CS-SMPC algorithm in (\ref{prob:SMPC_OCS}) is guaranteed.
Let us denote the optimal cost of Problem~(\ref{prob:SMPC_OCS}) at time step $k$ by $J_N^\ast(x_{k|k})$ and the associated predicted
optimal control sequence by
\begin{align}\label{eq:CS-SMPC_ControlSequence}
	\{u_{k|k}^\ast,\ldots,u_{k+N-1|k}^\ast \} = \{v_{k|k}^\ast + K_{k|k}^\ast y_{k|k},\ldots,v_{k+N-1|k}^\ast + K_{k+N-1|k}^\ast y_{k+N-1|k}\},
\end{align}
which generates the corresponding predicted optimal state sequence~$\{x_{k|k}^\ast,x_{k+1|k}^\ast\ldots,x_{k+N|k}^\ast \}$ with $x_{k|k}^\ast = x_k$.
Since we are dealing with systems with additive uncertainty, it is difficult to design a control law that ensures the mean square stability of the state~\cite{cannon2009probabilisticConstrained}.
Instead, and similarly to~\cite{cannon2011stochastic}, we show that the average stage cost is bounded from above.

\begin{Theorem}\label{theorem:RecursiveFeasibility}
	Suppose that $\Sigma_f$ is assignable and satisfies~(\ref{eq:AssignableConds}),  $\mu_f$ satisfies $\mu_f \in \mathcal{X}^\mu_f$, where the set $\mathcal{X}^\mu_f \subset \Re^{n_x}$ is a positively invariant set~\cite{borrelli2017predictive} such that, for any $ \mu \in \mathcal{X}^\mu_f$,
	\begin{subequations}
		\begin{align}
			&(A+B\tilde{K})\mu \in \mathcal{X}^\mu_f,\label{eq:(AplusBK)muinX_f}\\
			&\alpha_{x,i}^\top \mu + \| \Sigma_f^{1/2} \alpha_{x,i}\|\Phi^{-1}(1-p_{x,i}) - \beta_{x,i} \leq 0,\quad i = 0,\ldots, N_s-1 \label{eq:RecFeasX}\\
			&\alpha_{u,j}^\top \tilde{K} \mu + \|\Sigma_f^{1/2}\tilde{K}^\top\alpha_{u,j}\| \Phi^{-1}\left(1-p_{u,j}\right) - \beta_{u,j} \leq 0,\quad j = 0,\ldots, N_c-1, \label{eq:RecFeasU}
		\end{align}
	\end{subequations}
	where $\tilde{K}$ is derived from~(\ref{eq:KtildeFormula}), and $P_{\rm mean}$ is the solution of the following discrete-time Lyapunov equation
	\begin{align}
		(A  + B\tilde{K})^\top P_{\rm mean} (A  + B\tilde{K}) - P_{\rm mean} + Q + \tilde{K}^\top R \tilde{K} = 0. \label{eq:meanCond}
	\end{align}
	Then, the solution of Problem~(\ref{prob:SMPC_OCS}) ensures recursive feasibility and stability.
	Namely, the following two properties hold:
	\begin{itemize}
	
		\item[a)] 
		If Problem~(\ref{prob:SMPC_OCS}) is feasible 
		at time step $k$, i.e., if the control sequence~(\ref{eq:CS-SMPC_ControlSequence}) satisfies~(\ref{eq:CS-SMPC_stateChanceConstraints}), (\ref{eq:CS-SMPC_inputChanceConstraints}), (\ref{eq:CS-SMPC_meanTerminalConstraints}), and~(\ref{eq:CS-SMPC_covTerminalConstraints}), then	Problem~(\ref{prob:SMPC_OCS}) is feasible for all $k+n$, where $n\geq 1$.
		
		\item[b)] 
		The average stage cost is bounded from above. Specifically,  there exists a positive $\ell_{\max} > 0$ such that 
		\begin{align}\label{eq:averageExCostIsBounded}
			\lim_{n\rightarrow\infty}\frac{1}{n}\sum_{t=0}^{n-1} \Expectation_k\left[x^{\ast\top}_{k+t|k} Q x^\ast_{k+t|k} + u_{k+t|k}^{\ast\top} R u_{k+t|k}^\ast\right] \leq \ell_{\max}.
		\end{align}
		
	\end{itemize}
\end{Theorem}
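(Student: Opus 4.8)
The plan is to follow the classical MPC route: prove part (a) by constructing an explicit feasible candidate at time $k+1$ from the optimum at time $k$, and prove part (b) by using the optimal value $J_N^\ast(x_{k|k})$ as a Lyapunov-like function whose one-step increments are controlled by the cost of that same candidate.

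For part (a) I would work under the re-initialization (\ref{initcond2}), so that $\mu_{k+1}=\mu^\ast_{k+1|k}$ and $\Sigma_{k+1}=\Sigma^\ast_{k+1|k}$ coincide with the one-step-ahead prediction made at time $k$. The candidate for the problem at $k+1$ is the \emph{shifted tail} $\{v^\ast_{k+1|k},\dots,v^\ast_{k+N-1|k}\}$ with the same gains $\{K^\ast_{k+1|k},\dots,K^\ast_{k+N-1|k}\}$, to which I append a terminal stage realizing $u_{k+N}=\tilde K x_{k+N}$, i.e. $v_{k+N|k+1}=\tilde K\mu^\ast_{k+N|k}$ with gain $\tilde K$ from (\ref{eq:KtildeFormula}). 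Because the initialization makes the candidate's predicted mean and covariance for $t=k+1,\dots,k+N$ identical to the corresponding predictions at time $k$, the chance constraints (\ref{eq:CS-SMPC_stateChanceConstraints})--(\ref{eq:CS-SMPC_inputChanceConstraints}) inherited from the previous step hold automatically. It then remains to check the newly appended step: the mean terminal constraint (\ref{eq:CS-SMPC_meanTerminalConstraints}) follows from positive invariance (\ref{eq:(AplusBK)muinX_f}), since $\mu^\ast_{k+N|k}\in\mathcal{X}^\mu_f$ implies $(A+B\tilde K)\mu^\ast_{k+N|k}\in\mathcal{X}^\mu_f$; the covariance terminal constraint follows from assignability, since $\Sigma^\ast_{k+N|k}\preceq\Sigma_f$ and monotonicity of the map $\Sigma\mapsto(A+B\tilde K)\Sigma(A+B\tilde K)^\top$ with respect to the positive semidefinite order give $(A+B\tilde K)\Sigma^\ast_{k+N|k}(A+B\tilde K)^\top+DD^\top\preceq\Sigma_f$ by (\ref{eq:closedLoopSteadyState}); and the now chance-constrained state $x_{k+N|k+1}$ and terminal input $u_{k+N|k+1}$ satisfy (\ref{eq:CS-SMPC_stateChanceConstraints})--(\ref{eq:CS-SMPC_inputChanceConstraints}) by (\ref{eq:RecFeasX})--(\ref{eq:RecFeasU}), using $\Sigma^\ast_{k+N|k}\preceq\Sigma_f$ and the consequence $\|(\Sigma^\ast_{k+N|k})^{1/2}\alpha\|\le\|\Sigma_f^{1/2}\alpha\|$.

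For part (b) I would bound the one-step change of $J_N^\ast$ by the cost of this candidate and exploit the separation of the objective into a mean part (weighted by $\bar{Q}_{P,\mathrm{mean}}$ with terminal penalty $P_{\mathrm{mean}}$) and a covariance/trace part (weighted by $\bar{Q}_{P,\mathrm{cov}}$ with \emph{zero} terminal weight). The mean part telescopes \emph{exactly}: the terminal stage cost $\mu_{k+N}^\top(Q+\tilde K^\top R\tilde K)\mu_{k+N}$ added by the appended step is cancelled by the change in terminal penalty $\mu_{k+N}^\top[(A+B\tilde K)^\top P_{\mathrm{mean}}(A+B\tilde K)-P_{\mathrm{mean}}]\mu_{k+N}$ precisely because $P_{\mathrm{mean}}$ solves (\ref{eq:meanCond}), leaving $-\ell^{\mathrm{mean}}_k$. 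The covariance part does not telescope cleanly: since the terminal covariance carries zero weight, shifting promotes $\Sigma^\ast_{k+N|k}$ to a weighted interior stage and introduces the residual $\trace(Q\Sigma^\ast_{k+N|k})+\trace(R\tilde K\Sigma^\ast_{k+N|k}\tilde K^\top)$. Using $\Sigma^\ast_{k+N|k}\preceq\Sigma_f$ and monotonicity of the trace against a fixed positive semidefinite weight, this residual is bounded by the constant $c=\trace(Q\Sigma_f)+\trace(R\tilde K\Sigma_f\tilde K^\top)$. Collecting both parts and using $J_N^\ast(x_{k+1|k+1})\le J_N^{\mathrm{cand}}(x_{k+1|k+1})$ yields the key inequality $J_N^\ast(x_{k+1|k+1})\le J_N^\ast(x_{k|k})-\ell_k+c$, where $\ell_k$ is the full expected stage cost. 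Telescoping over $t=0,\dots,n-1$, discarding $-J_N^\ast\le 0$ by non-negativity of the value function, and dividing by $n$ gives (\ref{eq:averageExCostIsBounded}) with $\ell_{\max}=c$.

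The main obstacle is twofold, and both difficulties stem from the stochastic, unbounded-noise setting. First, the covariance residual: unlike deterministic MPC, the value function cannot be made to decrease by exactly the stage cost, so the best attainable conclusion is the average bound $\ell_{\max}=\trace(Q\Sigma_f)+\trace(R\tilde K\Sigma_f\tilde K^\top)$ rather than asymptotic convergence to the origin, which is exactly why the statement is phrased as a bounded average stage cost. Second, the re-initialization logic (\ref{initcond1})--(\ref{initcond2}) must be reconciled with the candidate, which is built on the open-loop initialization; when the closed-loop initialization is instead feasible and used, the comparison $J_N^\ast(x_{k+1|k+1})\le J_N^{\mathrm{cand}}(x_{k+1|k+1})$ has to be interpreted in conditional expectation over the new measurement, and some care is needed to guarantee that the decrease inequality survives this switch.
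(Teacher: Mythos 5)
Your proposal reproduces the paper's proof architecture almost exactly: the same shifted-tail candidate terminated by the covariance-assignment gain $\tilde K$, positive invariance of $\mathcal{X}^\mu_f$ for the terminal mean, the steady-state property (\ref{eq:closedLoopSteadyState}) of the assignable $\Sigma_f$ plus semidefinite monotonicity for the terminal covariance, conditions (\ref{eq:RecFeasX})--(\ref{eq:RecFeasU}) for the newly exposed chance constraints at stage $k+N$, and the same telescoping of $J_N^\ast$ in which the mean part cancels through the Lyapunov equation (\ref{eq:meanCond}) and the covariance residual is bounded by the identical constant $\ell_{\max}=\trace\bigl((Q+\tilde K^\top R\tilde K)\Sigma_f\bigr)$, which equals your $\trace(Q\Sigma_f)+\trace(R\tilde K\Sigma_f\tilde K^\top)$.

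There is, however, one genuine gap, and it sits precisely where the paper spends its extra effort: your claim that the appended stage $u_{k+N}=\tilde K x_{k+N}$ is realized by taking $v_{k+N|k+1}=\tilde K\mu^\ast_{k+N|k}$ ``with gain $\tilde K$.'' In the CS-SMPC policy class (\ref{eq:CSMPCControllerAll}) the gain multiplies the auxiliary process $y_{t|k}$, which propagates \emph{open loop}, $y_{t+1|k}=Ay_{t|k}+Dw_t$, whereas the state deviation obeys $x_{t+1|k}-\mu_{t+1|k}=A(x_{t|k}-\mu_{t|k})+BK_{t|k}y_{t|k}+Dw_t$. The two processes coincide at initialization but diverge as soon as any feedback acts through $B$, so at the end of the horizon $y_{k+N}\neq x_{k+N}-\mu_{k+N}$ in general, and your choice yields $u_{k+N}=\tilde K\mu^\ast_{k+N|k}+\tilde K y_{k+N}\neq\tilde K x_{k+N}$. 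With that control the terminal deviation dynamics are $A(x_{k+N}-\mu_{k+N})+B\tilde K y_{k+N}+Dw_{k+N}$ rather than $(A+B\tilde K)(x_{k+N}-\mu_{k+N})+Dw_{k+N}$; cross-covariances between the deviation and $y$ enter, the closed-loop Lyapunov recursion no longer applies, and the two steps your argument rests on --- $\Sigma_{k+N+1}\preceq\Sigma_f$ and the chance-constraint check via (\ref{eq:RecFeasX})--(\ref{eq:RecFeasU}) --- fail, which in turn invalidates the cost comparison in part (b). The paper resolves this with a dedicated construction: when $y_{k+N|k}\neq A^N y_{k|k}$ it uses the realization-dependent rank-one gain
\begin{align}
K_{k+N|k}=\tilde K\bigl(x^\ast_{k+N|k}-\mu^\ast_{k+N|k}\bigr)\frac{1}{\|y_{k+N|k}-A^Ny_{k|k}\|^2}\bigl(y_{k+N|k}-A^Ny_{k|k}\bigr)^\top,
\end{align}
with feedforward $v_{k+N|k}=\tilde K\mu^\ast_{k+N|k}-K_{k+N|k}A^Ny_{k|k}$, and in the degenerate case $y_{k+N|k}=A^Ny_{k|k}$ it shows $x^\ast_{k+N|k}$ is deterministic and sets $K_{k+N|k}=0$, $v_{k+N|k}=\tilde K x^\ast_{k+N|k}$; only then is $\tilde K x^\ast_{k+N|k}$ exhibited as a member of the feasible policy class. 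Your closing worry about reconciling the initialization switch (\ref{initcond1})--(\ref{initcond2}) with the candidate is in fact a symptom of the same phenomenon --- $y$ is not the state deviation, so tail policies do not transplant verbatim --- and is a point your write-up flags but, like the paper, does not fully resolve.
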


\begin{proof}
	In order to simplify notation, henceforth we will rewrite the cost function $J_{N}$  in~(\ref{eq:SMPC_Cost}) as
	\begin{align}
	J_{N}(x_{k|k}; u_{k|k},\ldots, u_{k+N-1|k}) = \sum_{t=k}^{k+N-1}\ell(x_{t|k},u_{t|k}) + J_f(x_{k+N|k}),
	\end{align}
	where
	\begin{align}
	\ell(x_{t|k},u_{t|k}) &= \Expectation_k\left[x_{t|k}^\top Q x_{t|k} + u_{t|k}^\top R u_{t|k}\right],\label{eq:runningCost}
	\end{align}
	and $J_f(\cdot)$ is as in~(\ref{eq:terminalCost}).
		
	In order to prove recursive feasibility, it is sufficient to show that, given that Problem~(\ref{prob:SMPC_OCS}) is feasible at time step $k$, it is feasible at time step $k+1$.
	Specifically, we show that, given Problem~(\ref{prob:SMPC_OCS}) is feasible at time step $k$, there exists at least one control sequence solving Problem~(\ref{prob:SMPC_OCS}) at time step $k+1$ with initial mean and covariance set to $\mu^\ast_{k+1|k}$ and $\Sigma^\ast_{k+1|k}$.
	To this end, we consider Problem~(\ref{prob:SMPC_OCS})
	with the following control sequence of length $N$
	\begin{align}\label{eq:recursiveFeasiblityTemporalControlSequence}
		\bm{u} = \{v^\ast_{k+1|k} + K_{k+1|k}^\ast y_{k+1|k},\ldots,v^\ast_{k+N-1|k} + K_{k+N-1|k}^\ast y_{k+N-1|k}, \tilde{K}x_{k+N|k}^\ast \},
	\end{align}
	where the first $N-1$ elements are derived from the optimal control sequence at time step $k$ in~(\ref{eq:CS-SMPC_ControlSequence}), and the last step is a covariance assignment control with gain as in~(\ref{eq:KtildeFormula}).
	This control sequence steers the predicted state trajectory from $x^\ast_{k+1|k}$ to
	\begin{align}\label{eq:stateSequencekPlus1}
		\bm{x} = \{x^\ast_{k+1|k},\ldots,x^\ast_{k+N|k}, (A + B\tilde{K})x_{k+N|k}^\ast + D w_{k+N}\}.
	\end{align}
	Note that the control sequence~(\ref{eq:recursiveFeasiblityTemporalControlSequence}) can be separated to the mean control sequence
	\begin{align}\label{eq:CS-SMPC_meanControlSequence}
		\{v^\ast_{k+1|k},\ldots,v^\ast_{k+N-1|k}, \tilde{K}\mu_{k+N|k}^\ast \},
	\end{align}
	and the covariance steering sequence
	\begin{align}\label{eq:CS-SMPC_covControlSequence}
		\{K_{k+1|k}^\ast y_{k+1|k},\ldots, K_{k+N-1|k}^\ast y_{k+N-1|k}, \tilde{K}(x_{k+N|k}^\ast-\mu_{k+N|k}^\ast) \}.
	\end{align}
	Since in the predicted state sequence~(\ref{eq:stateSequencekPlus1}) the first $N-1$ components follow the same path as the predicted solution at time step $k$, 
	we only need to check the satisfaction of the constraints (\ref{eq:SMPC_stateChanceConstraints}) (\ref{eq:SMPC_inputChanceConstraints}) (\ref{eq:SMPC_meanTermConst}), and (\ref{eq:SMPC_covTermConst}) at the end of the horizon.
	
	We first show that the mean state at the end of the horizon satisfies the terminal mean constraint~(\ref{eq:SMPC_meanTermConst}).
	The first $N-1$ mean control subsequence in~(\ref{eq:CS-SMPC_meanControlSequence}) steers $\mu^\ast_{k+1|k}$ to $\mu^\ast_{k+N|k}$.
	Because of the fact that $\mu^\ast_{k+N|k} \in \mathcal{X}^\mu_f$, the last entry in~(\ref{eq:CS-SMPC_meanControlSequence}) steers the system mean to
	\begin{equation} \label{eq:recursiveFeasibilitymu}
		\mu_{k+N+1|k} = (A + B\tilde{K})\mu^\ast_{k+N|k} \in \mathcal{X}^\mu_f.
	\end{equation}
	Thus, the constraint~(\ref{eq:SMPC_meanTermConst}) is satisfied at the end of the horizon.
	
	Next, we show that the terminal covariance constraint~(\ref{eq:SMPC_covTermConst}) is satisfied at the end of the horizon.
	Note that the first $N-1$ covariance control subsequence in~(\ref{eq:CS-SMPC_covControlSequence}) steers $\Sigma^\ast_{k+1|k}$ to $\Sigma^\ast_{k+N|k}$.
	It follows from~(\ref{eq:closedLoopSystem}) that
	\begin{align}\label{eq:SigmakpNp1givenk}
		\Sigma_{k+N+1|k} &= (A+B\tilde{K})\Sigma^\ast_{k+N|k}(A+B\tilde{K})^\top + DD^\top.
	\end{align}
	In addition, since $\Sigma_f$ is designed to be assignable, it follows from~(\ref{eq:closedLoopSteadyState}) that
	\begin{align}\label{eq:SigmafSteadyState}
		\Sigma_f &= (A+B\tilde{K})\Sigma_f(A+B\tilde{K})^\top + DD^\top.
	\end{align}
	It then follows from~(\ref{eq:SigmakpNp1givenk}),~(\ref{eq:SigmafSteadyState}), and the fact that $\Sigma^\ast_{k+N|k} \preceq \Sigma_f$, 
	that
	\begin{align}\label{eq:SigmakpNp1givenkLessThanSigmaf}
		\Sigma_{k+N+1|k} \preceq \Sigma_f,
	\end{align}
	which indicates the satisfaction of the condition~(\ref{eq:SMPC_covTermConst}) at the end of the horizon.
	
	The remaining constraints needed to be satisfied are~(\ref{eq:SMPC_stateChanceConstraints}) and~(\ref{eq:SMPC_inputChanceConstraints}).
	Note that, because of~(\ref{eq:recursiveFeasibilitymu}),
	\begin{align}
		\alpha_{x,i}^\top \mu_{k+N+1|k} + \| \Sigma_f^{1/2} \alpha_{x,i}\|\Phi^{-1}(1-p_{x,i}) - \beta_{x,i} \leq 0,\quad i = 0,\ldots, N_s-1,
	\end{align}
	holds.
	In addition, because of (\ref{eq:p_xj}) with $\epsilon_x \in [0,0.5)$, it follows that $p_{x,i} \leq 0.5$, and thus, $\Phi^{-1}(1-p_{x,i}) \geq 0$ for $i = 0,\ldots,N_s - 1$.
	Therefore, along with~(\ref{eq:SigmakpNp1givenkLessThanSigmaf}),
	\begin{align}
		\alpha_{x,i}^\top \mu_{k+N+1|k} + \| \Sigma_{k+N+1|k}^{1/2} \alpha_{x,i}\|\Phi^{-1}(1-p_{x,i}) - \beta_{x,i} \leq 0,\quad i = 0,\ldots, N_s-1,
	\end{align}
	which means that~(\ref{eq:SMPC_stateChanceConstraints}) is satisfied at the end of the horizon.
	Following a similar discussion, we can show that~(\ref{eq:SMPC_inputChanceConstraints}) is also satisfied at the end of the horizon.
	Namely,
	\begin{align}
		\alpha_{u,j}^\top \tilde{K} \mu_{k+N+1|k} + \|\Sigma_{k+N+1|k}^{1/2}\tilde{K}^\top\alpha_{u,j}\| \Phi^{-1}\left(1-p_{u,j}\right) - \beta_{u,j} \leq 0,\quad j = 0,\ldots, N_c-1.
	\end{align}
	Thus, we have shown that, given that Problem~(\ref{prob:SMPC_OCS}) is feasible at time step $k$, the control sequence in~(\ref{eq:CS-SMPC_meanControlSequence}) leads to the satisfaction of all the constraints in Problem~(\ref{prob:SMPC_OCS}) and~(\ref{prob:SMPC_Formulation}).
	The remaining issue is to show that the proposed control policy 
	$u_{k+N|k} = v_{k+N|k} +  K_{k+N|k}y_{k+N|k}$ 
	reproduces the same control input as $u_{k+N|k} = \tilde{K} x_{k+N|k}^\ast$.
	This can be achieved by letting
	\begin{subequations}
		\begin{align}
		\tilde{K}\mu_{k+N|k}^\ast&= v_{k+N|k} + K_{k+N|k}A^N y_{k|k},\\
		\tilde{K}(x_{k+N|k}^\ast - \mu_{k+N|k}^\ast) &= K_{k+N|k}(y_{k+N|k} - A^N y_{k|k}),
		\end{align}
	\end{subequations}
where
	\begin{align}\label{eq:y_k+Ngivenk}
		y_{k+N|k} = A^N y_{k|k} + \begin{bmatrix}
		A^{N-1}D & \cdots & AD & D 
		\end{bmatrix} 
		\begin{bmatrix}
		w_k \\ \vdots \\ w_{k+N-1}
		\end{bmatrix}.
	\end{align}
If $y_{k+N|k} \neq A^N y_{k|k}$, by letting
	\begin{subequations}
		\begin{align}
			K_{k+N|k} &=  \tilde{K} (x_{k+N|k}^\ast - \mu_{k+N|k}^\ast)\frac{1}{\|y_{k+N|k} - A^N y_{k|k}\|^2}(y_{k+N|k} - A^N y_{k|k})^\top,\\
			v_{k+N|k} &= \tilde{K}\mu_{k+N|k}^\ast - K_{k+N|k} A^N y_{k|k},
		\end{align}
	\end{subequations}
	yields the desired result.
If, on the other hand, $y_{k+N|k} = A^N y_{k|k}$, it follows from (\ref{eq:y_k+Ngivenk}) that
\begin{align}
	y_{k+N|k} -  A^N y_{k|k}  =  \begin{bmatrix}
	A^{N-1}D & \cdots & AD & D 
	\end{bmatrix} 
	\begin{bmatrix}
	w_k \\ \vdots \\ w_{k+N-1}
	\end{bmatrix}
	= 0,
\end{align}
and hence,  
\begin{align}
	x_{k+N|k}^\ast & = A^N x_{k|k} + \begin{bmatrix}
	A^{N-1}B & \cdots & AB &  B
	\end{bmatrix} 
	\begin{bmatrix}
	u_{k|k}^\ast \\ \vdots \\ u_{k+N-1|k}^\ast
	\end{bmatrix}
	+ 	\begin{bmatrix}
	A^{N-1}D & \cdots & AD &  D 
	\end{bmatrix}  
	\begin{bmatrix}
	w_k \\ \vdots \\ w_{k+N-1}
	\end{bmatrix},\\
	&= A^N x_{k|k} + \begin{bmatrix}
	A^{N-1}B & \cdots & AB & B
	\end{bmatrix} 
	\begin{bmatrix}
	u_{k|k}^\ast \\ \vdots \\ u_{k+N-1|k}^\ast
	\end{bmatrix}.
\end{align}
Thus  $x_{k+N|k}^\ast$ can be computed deterministically from the control inputs.
In this case, we can choose	
	\begin{subequations}
		\begin{align}
			v_{k+N|k} &= \tilde{K}x_{k+N|k}^\ast,\\
			K_{k+N|k} &= 0.
		\end{align}
	\end{subequations}  
So far, we have shown the recursive feasibility of the closed-loop system with CS-SMPC.
	Next, we discuss the issue of stability.
	Note that the cost $J_N(x^\ast_{k+1|k}; \bm{u})$ can be represented as
	\begin{align} \label{eq:Jevol}
		J_N(x^\ast_{k+1|k}; \bm{u}) = &J_N^\ast(x_{k|k})-\ell(x_{k|k},u_{k|k}^\ast) + \ell(x^\ast_{k+N|k},\tilde{K}x_{k+N|k}^\ast)\nonumber \\
			&- J_f(x^\ast_{k+N|k}) + J_f((A + B\tilde{K})x^\ast_{k+N|k} +D w_{k+N}).
	\end{align}
	We first show that
	\begin{align} \label{eq:JN_Inequality}
		J_N^\ast(x^\ast_{k+1|k}) \leq J_N^\ast(x_{k|k})-\ell(x_{k|k},u_{k|k}^\ast) + \mathtt{tr}\left((Q + \tilde{K}^\top R\tilde{K})\Sigma_f\right).
	\end{align}
	It follows from~(\ref{eq:terminalCost}) that
	\begin{align}
		J_f(x^\ast_{k+N|k}) &= \mu^{\ast\top}_{k+N|k} P_{\rm mean} \mu^\ast_{k+N|k}.
	\end{align}
	In addition, and since the mean of the system state at the end of horizon is $(A + B\tilde{K})\mu^\ast_{k+N|k}$, the following holds
	\begin{align}
		J_f((A + B\tilde{K}) x^\ast_{k+N|k}  + D w_{k+N} ) = \mu^{\ast\top}_{k+N|k}(A  + B\tilde{K})^\top P_{\rm mean} (A  + B\tilde{K})\mu^\ast_{k+N|k}.
	\end{align}
	Furthermore, it follows from~(\ref{eq:runningCost}) that
	\begin{align}
		\ell(x^\ast_{k+N|k}, \tilde{K}x_{k+N|k}^\ast) &= \mu_{k+N|k}^{\ast \top} (Q + \tilde{K}^\top R \tilde{K}) \mu^\ast_{k+N|k} + \mathtt{tr}\left((Q + \tilde{K}^\top R \tilde{K})\Sigma_{k+N|k}^\ast\right).
	\end{align}
	Thus, using the conditions~(\ref{eq:meanCond}) and~(\ref{eq:closedLoopSteadyState}), it follows from~(\ref{eq:Jevol}) that
	\begin{align}
		&J_N(x^\ast_{k+1|k},\bm{u}) - J_N^\ast(x_{k|k}) + \ell(x_{k|k}, u_{k|k}^\ast)= \ell(x^\ast_{t+N|t}, \tilde{K}x_{k+N|k}^\ast) - J_f(x^\ast_{t+N|t}) + J_f((A + B\tilde{K}) x^\ast_{t+N|t} +D w_{t+N}),\nonumber \\
		&= \mu_{t+N|t}^{\ast\top}\left(Q + \tilde{K}^\top R \tilde{K} - P_{\rm mean} + (A  + B\tilde{K})^\top P_{\rm mean} (A  + B\tilde{K}) \right)\mu^\ast_{t+N|t} + \mathtt{tr}\left((Q + \tilde{K}^\top R\tilde{K})\Sigma^\ast_{t+N|t}\right),\nonumber\\
		&= \mathtt{tr}\left((Q + \tilde{K}^\top R\tilde{K})\Sigma^\ast_{t+N|t}\right) = \mathtt{tr}\left((Q + \tilde{K}^\top R\tilde{K})^{1/2}\Sigma^\ast_{t+N|t}(Q + \tilde{K}^\top R\tilde{K})^{1/2}\right),\nonumber \\
		&\leq \mathtt{tr}\left((Q + \tilde{K}^\top R\tilde{K})^{1/2}\Sigma_f(Q + \tilde{K}^\top R\tilde{K})^{1/2}\right) = \mathtt{tr}\left((Q + \tilde{K}^\top R\tilde{K})\Sigma_f\right).
	\end{align}
	Note also that since
	\begin{align}
		J_N^\ast(x^\ast_{k+1|k}) \leq J_N(x^\ast_{k+1|k},\bm{u}),
	\end{align}
	inequality~(\ref{eq:JN_Inequality}) holds.
	It then follows from~(\ref{eq:JN_Inequality}) that
	\begin{align*}
		\ell(x_{k|k},u_{k|k}^\ast) &\leq J_N^\ast(x_{k|k}) - J_N^\ast(x^\ast_{k+1|k}) + \mathtt{tr}\left((Q + \tilde{K}^\top R\tilde{K})\Sigma_f\right),\\
		\ell(x^\ast_{k+1|k},u_{k+1|k}^\ast) &\leq J_N^\ast(x^\ast_{k+1|k}) - J_N^\ast(x^\ast_{k+2|k}) + \mathtt{tr}\left((Q + \tilde{K}^\top R\tilde{K})\Sigma_f\right),\\
		&\vdots \nonumber\\
		\ell(x^\ast_{k+n-1|k},u_{k+n-1|k}^\ast) &\leq J_N^\ast(x^\ast_{k+n-1|k}) - J_N^\ast(x^\ast_{k+n|k}) + \mathtt{tr}\left((Q + \tilde{K}^\top R\tilde{K})\Sigma_f\right),
	\end{align*}
	and thus,
	\begin{align}
		\lim_{n\rightarrow\infty}\frac{1}{n}\sum_{t=0}^{n-1} \ell(x^\ast_{k+t|k},u_{k+t|k}^\ast)
		\leq
		\lim_{n\rightarrow\infty} \frac{1}{n} \left(J_N^\ast(x_{k|k}) - J_N^\ast(x^\ast_{k+n|k})\right) + \mathtt{tr}\left((Q + \tilde{K}^\top R\tilde{K})\Sigma_f\right).
	\end{align}
	Since $J_N^\ast(\cdot)$ has a finite lower bound, the right-hand-side of this inequality is bounded from above.
	Thus, there exists a positive value $\ell_{\max}$ such that
	\begin{align}
		\lim_{n\rightarrow\infty}\frac{1}{n}\sum_{k=0}^{n} \ell(x^\ast_{t+k|t},u_{t+k|t}^\ast) \leq \mathtt{tr}\left((Q + \tilde{K}^\top R\tilde{K})\Sigma_f\right) = \ell_{\max}.
	\end{align}
	which leads to~(\ref{eq:averageExCostIsBounded}).
\end{proof}

\begin{Remark}
	As indicated by (\ref{eq:KtildeFormula}), the gain matrix $\tilde{K}$ that satisfies~(\ref{eq:closedLoopSteadyState}) is not unique.
	In our numerical implementation of CS-SMPC, we have used
	\begin{align} \label{eq:Ktilde}
		\tilde{K} = B^+\left((\Sigma_f - DD^\top)^{1/2}G_1G_2^\top \Sigma_f^{-1/2} - A\right),
	\end{align}
	which can be derived by setting $T=I$ and $Z = 0$.
\end{Remark}

\begin{Remark}
	We choose $\mathcal{X}^\mu_f$ to be the maximal positively invariant set
	 for the mean system dynamics
	\begin{align}
		\mu_{k+1} = A\mu_k + Bv_{k},
	\end{align}
	subject to the inequalities
	\begin{subequations}
		\begin{align}
			&\alpha_{x,i}^\top \mu_k + \| \Sigma_f^{1/2} \alpha_{x,i}\|\Phi^{-1}(1-p_{x,i}) - \beta_{x,i} \leq 0,\quad i = 0,\ldots, N_s-1\\
			&\alpha_{u,j}^\top \tilde{K} \mu_k + \|\Sigma_f^{1/2}\tilde{K}^\top\alpha_{u,j}\| \Phi^{-1}\left(1-p_{u,j}\right) - \beta_{u,j} \leq 0,\quad j = 0,\ldots, N_c-1,
		\end{align}
	\end{subequations}
	 for all $k = 0,1,\ldots,N$.
	Such a set can be computed efficiently from the results in~\cite{borrelli2017predictive}.
\end{Remark}

\begin{Remark}
	Because the eigenvalues of $A+B\tilde{K}$ lie inside the unit ball and $Q+\tilde{K}R\tilde{K} \succeq 0$, it follows from~(\ref{eq:meanCond}) that $P_{\rm mean} \succeq 0$, and thus, the cost function~(\ref{eq:SMPC_Cost}) is convex.
\end{Remark}

\section{Numerical Simulations}   \label{sec:Numerical Simulation}

In this section we validate the proposed algorithm using two numerical examples.
In the first example, we illustrate the benefits of CS-SMPC using a problem with simple dynamics.
This example was chosen to compare some of the difficulties with implementing some of the existing SMPC methods and also demonstrate the computational benefits of the control parameterization (\ref{eq:CSMPCControllerAll}) used by CS-SMPC.
In the second example, we demonstrate that CS-SMPC can be applied to control an autonomous racing vehicle.
For all numerical calculations in this section we implemented in MATLAB using YALMIP~\cite{yalmip} along with the MPT3 toolbox~\cite{MPT3} to compute the maximal invariant sets and 
used MOSEK~\cite{mosek} to solve the relevant optimization problems.
All computations were performed on a computer with an Intel Xeon E5-1650 v4 @ 3.60GHz CPU processor with 64GB of RAM running Windows 10 OS.

\subsection{Illustrative Example with 2D Dynamics \label{subsec:NSSimpleExample}}

In this section, we demonstrate the benefit of CS-SMPC using a numerical example similar to the one used in~\cite{primbs2009stochastic}.
We set the system dynamics matrices in~(\ref{eq:SystemDynamics}) to be
\begin{equation}
	A = \begin{bmatrix}
	1.02 & -0.1 \\ 0.1 & 0.98
	\end{bmatrix},\qquad
	B = \begin{bmatrix}
	0.1 & 0 \\ 0.05 & 0.01
	\end{bmatrix},\qquad
	D = \begin{bmatrix}
	0.01 & 0 \\ 0 & 0.01
	\end{bmatrix}.
\end{equation}
The initial condition is set to
$x_0 = \begin{bmatrix}
-0.3 & 1.2\end{bmatrix}^\top$.
Notice that the eigenvalues of the $A$ matrix lie outside the unit disk ($\lambda_{1,2} = 1.0 \pm \mathrm{i}\ 0.098$).
In addition, we consider the following state chance constraint
\begin{align}\label{eq:SimpleExStateConst}
	\Pr\left(\begin{bmatrix}
	-2 & 1
	\end{bmatrix} x_k \leq 2.5\right) \geq 1 - 10^{-3},
\end{align}
for all $k = 0, 1,\ldots$.
We wish to minimize the cost function in~(\ref{prob:InfiniteHorizonOptimalControlProblem}) with the following matrices
\begin{align}\label{eq:SimpleExQR}
	Q = \begin{bmatrix}
		2 & 0 \\ 0 & 1
	\end{bmatrix},\qquad
	R = \begin{bmatrix}
		5 & 0 \\ 0 & 20
	\end{bmatrix},
\end{align}
while satisfying the constraints (\ref{eq:SimpleExStateConst}).

Figure~\subref*{fig:SimpleExUncontrolled} shows 100 sample trajectories of the evolution of the uncontrolled system.
The trajectories follow increasingly large spiral paths that violate the constraint~(\ref{eq:SimpleExStateConst}).
Figure~\subref*{fig:SimpleExLQR} shows the results of 100 sample trajectories using a controller with the infinite-horizon LQR gain corresponding to~(\ref{eq:SimpleExQR}).
As LQR controllers do not take into account any constraints, the majority of the trajectories in Fig.~\subref*{fig:SimpleExLQR} also violate the state constraint~(\ref{eq:SimpleExStateConst}).
\begin{figure}[htbp]
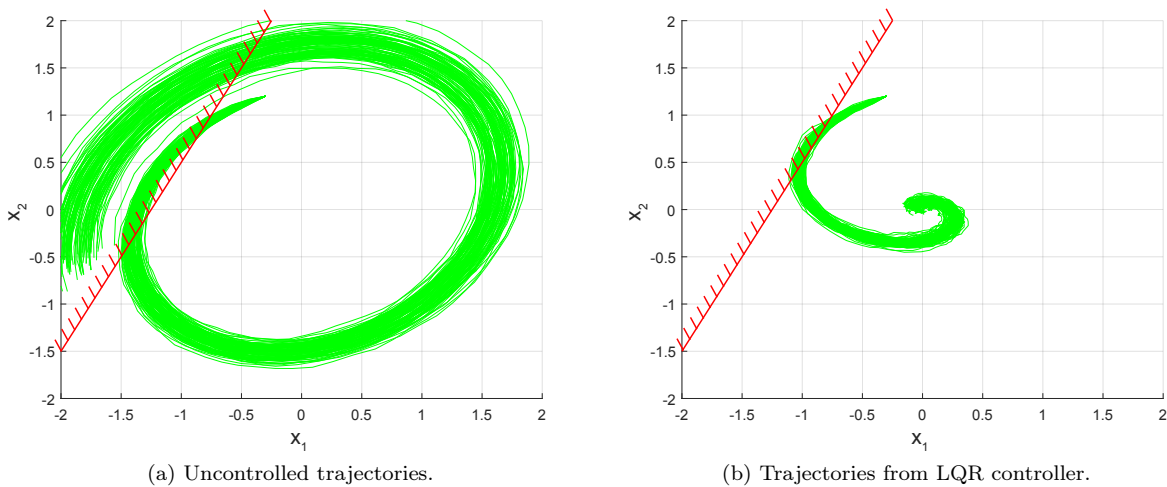

	\centering
	\subfloat[Uncontrolled trajectories.\label{fig:SimpleExUncontrolled}]{\includegraphics[width=0.5\columnwidth]{Uncontrolled}}
	\subfloat[Trajectories from LQR controller. \label{fig:SimpleExLQR}]{\includegraphics[width=0.5\columnwidth]{LQR}}
	\caption{System state trajectories.}
\end{figure}

We first apply the SMPC approach proposed in~\cite{farina2013probabilistic}.
However, direct application of the methodology in~\cite{farina2013probabilistic} led to an inaccurate estimate of the state covariance at each time step, and hence, to a difficulty satisfying the chance constraints.
Consequently, we modified the approach in~\cite{farina2013probabilistic} as follows.

The terminal cost in~\cite{farina2013probabilistic} is
\begin{align}
	\min_{u_{k|k},u_{k+1|k},\ldots, u_{k+N-1|k}} &J_{N}(x_k; u_{k|k},u_{k+1|k},\ldots, u_{k+N-1|k}) =  \nonumber \\
	&\mathbb{E}_k\left[x_{k+N|k} Q_N x_{k+N|k} + \sum_{t=k}^{k+N-1}x_{t|k}^\top Q x_{t|k} + u_{t|k}^\top R u_{t|k}\right],
\end{align}
where $Q_N$ is the solution of the following discrete-time algebraic Riccati equation
\begin{align}
	A^\top Q_N A - Q_N - A^\top Q_N B (B^\top Q_N B + R)^{-1} B^\top Q_N A + Q = 0.
\end{align}
In~\cite{farina2013probabilistic} the covariance at the end of the horizon is bounded from above by the solution of the discrete-time Lyapunov equation~(\ref{eq:FarinaTerminalCond}).
The terminal mean set  $\bar{\mathcal{X}}^\mu_f$ is the positive invariant set such that
\begin{align}
	(A+BK_{\rm LQR}) \mu \in \bar{\mathcal{X}}^\mu_f,\qquad \forall\ \mu \in \bar{\mathcal{X}}^\mu_f,
\end{align}
where $K_{\rm LQR}$ is the LQR controller gain.
The control policy in~\cite{farina2013probabilistic} uses feedback of the state deviation from the mean, which leads to the following covariance dynamics
\begin{align}\label{eq:covDynamicsFarina}
	\Sigma_{t+1|k} = (A+BK_{t|k})\Sigma_{t|k}(A+BK_{t|k})^\top + DD^\top,
\end{align}
which is a non-convex constraint due to the coupling between $K_{t|k}$ and $\Sigma_{t|k}$.
The authors of~\cite{farina2013probabilistic} mention in~\cite{farina2016model} that they used the following convex relaxation technique proposed in~\cite{primbs2009stochastic} with the mild assumption that $\Sigma_{t|k} \succ 0$ for all $t  \ge k$,
\begin{align}
	\Sigma_{t+1|k} \succeq (A+B\Theta_{t|k})\Sigma_{t|k}^{-1}(A+B\Theta_{t|k})^\top + DD^\top,
\end{align}
which is, using Schur complement, equivalent to the following LMI
\begin{align}
	\begin{bmatrix}
		\Sigma_{t+1|k} & A + B\Theta_{t|k} & D \\
		(A + B\Theta_{t|k})^\top & \Sigma_{t|k} & 0\\
		D^\top & 0 & I_{n_w}
	\end{bmatrix}\succeq 0,
\end{align}
where $\Theta_{t|k} = K_{t|k}\Sigma_{t|k}$ is a new design variable.
However, in this example, we observed that this relaxation led to imprecise computation of the covariance.
So we used instead the disturbance feedback approach of~\cite{oldewurtel2008tractable,paulson2017stochastic}, where the control input is an affine function of the past disturbance sequence
\begin{align}\label{eq:distFeedback}
	u_{t|k} = v_{t|k} + \sum_{\tau = k}^{t - 1} M_{t,\tau}Dw_\tau,
\end{align}
which is known to lead to a convex formulation of the covariance dynamics~\cite{goulart2006optimization}.
After confirming that the problem is feasible with initial condition $\mu_0 = x_0, \Sigma_0 = 0$,
we used the initialization described in Section~\ref{sec:init}   to maintain feasibility for $k \geq 1$.
The horizon was chosen as $N = 10$.

The proposed CS-SMPC algorithm with the same horizon length was also applied to the system.
We used the same terminal target covariance as the one in~(\ref{eq:FarinaTerminalCond}).
The results are shown in Fig.~\ref{fig:SimpleExDistAll} (100 sample trajectories shown).
In both cases the trajectories successfully avoid the constraint and converge to the origin and the overall behavior is very similar.

The main difference between the two methods in this example is the computational cost.
As shown in Fig.~\ref{fig:SimpleExSolverTime}, the CS-SMPC algorithm exhibits faster computational speed.
This superior performance of CS-SMPC  is due to the difference in the control approach formulation.
The CS-SMPC algorithm uses the current value of the $y$ variable, and thus, the $K$ matrix in~(\ref{eq:Kmat}) is block diagonal, while the disturbance feedback controller~(\ref{eq:distFeedback}) uses the past disturbance sequence, implying that a lower block triangular matrix is needed (see~\cite{oldewurtel2008tractable,goulart2006optimization}), which leads to more computations.

\begin{figure}[htbp]
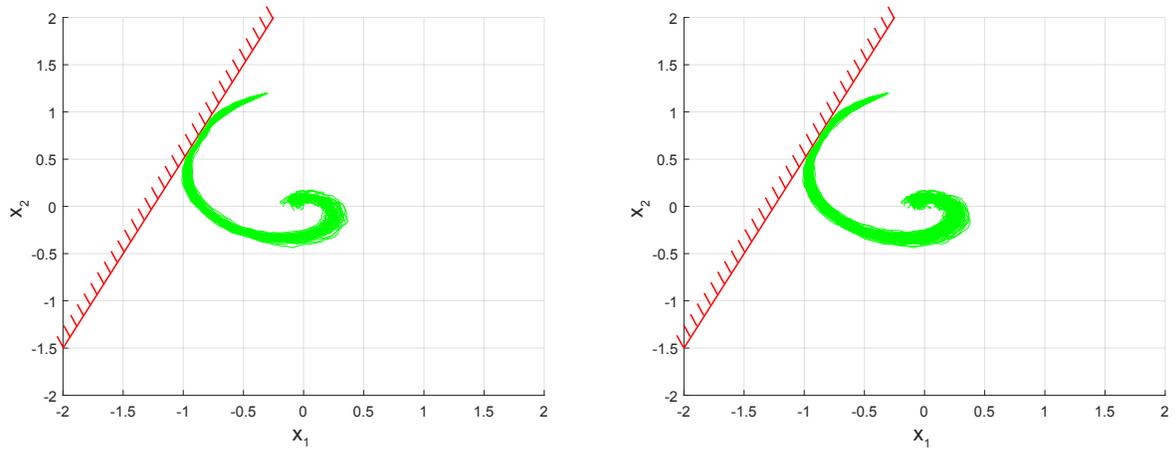

	\centering
	\subfloat[Trajectories resulting from the modified controller in~\cite{farina2013probabilistic}. 
	\label{fig:SimpleExDist}]{\includegraphics[width=0.5\columnwidth]{SimpleExFarina}}
	\subfloat[Trajectories resulting from the CS-SMPC approach. \label{fig:SimpleExCSSMPC}]{\includegraphics[width=0.5\columnwidth]{SimpleExCSSMPC}}
	\caption{System state trajectories for Example~1.}
	\label{fig:SimpleExDistAll}
\end{figure}

\begin{figure}[htbp]
	\centering
	\includegraphics[width=0.6\columnwidth]{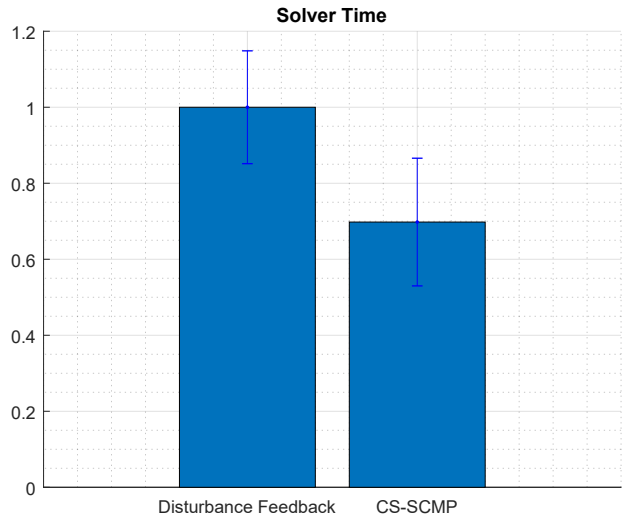}
	\caption{Mean and standard deviation of the computation time of each method. The time is normalized by the computation time of the disturbance feedback method. \label{fig:SimpleExSolverTime}}
\end{figure}

\subsection{Vehicle Control Example}

The previous simple numerical example illustrated the computational benefits of the CS-SMPC approach stemming from the convexity of the problem formulation and the block diagonal structure of the feedback gain matrix.
In this section, we validate the efficacy of the proposed CS-SMPC algorithm via a more realistic example of a racing vehicle driving around a road circuit.

The key benefit of using CS-SMPC for this example is illustrated in Fig.~\ref{fig:RaceCarControl}.
A deterministic MPC approach, as shown in Fig.~\subref*{fig:RaceCar_DMPC}, neglects the effect of stochastic disturbances, and thus, 
a safety margin to the constraint boundaries is needed.
Deterministic MPC thus requires trial-and-error to find reasonable values to achieve good performance while not violating the constraints.
Figure~\subref*{fig:RaceCar_RandomizedMPC} shows an example of a planned trajectory using a stochastic MPC controller with open-loop vehicle dynamics.
Since the effect of noise increases with time, it is difficult to have a long time horizon.
Stochastic tube-MPC uses closed-loop vehicle dynamics as shown in Fig.~\subref*{fig:RaceCarSTMPC}.
As the stabilizing gain of a stochastic tube-MPC is generally constant, the resulting state covariance converges to a constant value.
In addition, a priori calculation of the feedback gains often requires trial and error,
especially in constrained environments,  
Fig.~\subref*{fig:RaceCarCSSMPC} illustrates the benefit of the proposed CS-SMPC approach.
By directly controlling the covariance of the system state, the mean trajectory is steered to the inner edge of the road, which leads to a better performance for a race car trying to minimize lap time. 

\begin{figure}[htbp]
	\captionsetup[subfigure]{width=0.4\columnwidth}
	\centering
	\subfloat[Deterministic MPC.\label{fig:RaceCar_DMPC}]{\includegraphics[width=0.50\columnwidth]{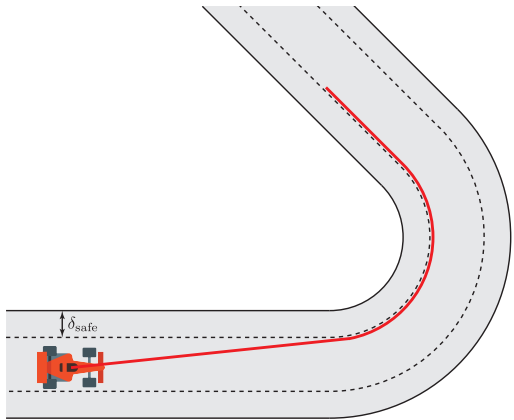}}
	\subfloat[Stochastic MPC with open-loop vehicle dynamics.\label{fig:RaceCar_RandomizedMPC}]{\includegraphics[width=0.50\columnwidth]{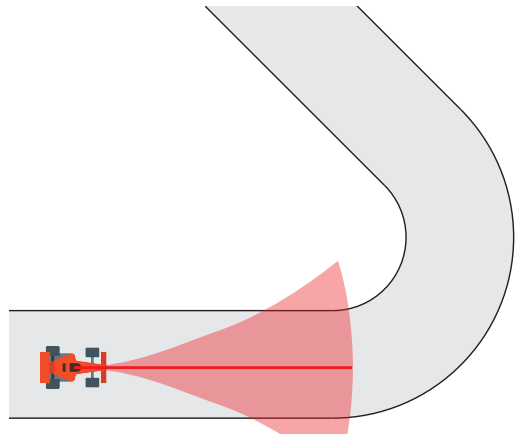}}\\
	\subfloat[Stochastic MPC with closed-loop constant gain vehicle dynamics.\label{fig:RaceCarSTMPC}]{\includegraphics[width=0.50\columnwidth]{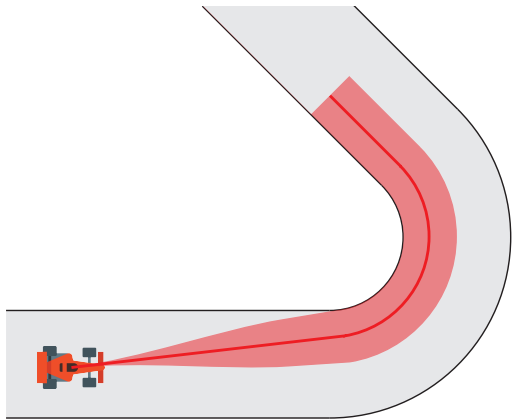}}
	\subfloat[Stochastic MPC with closed-loop time-varying gain vehicle dynamics.\label{fig:RaceCarCSSMPC}]{\includegraphics[width=0.50\columnwidth]{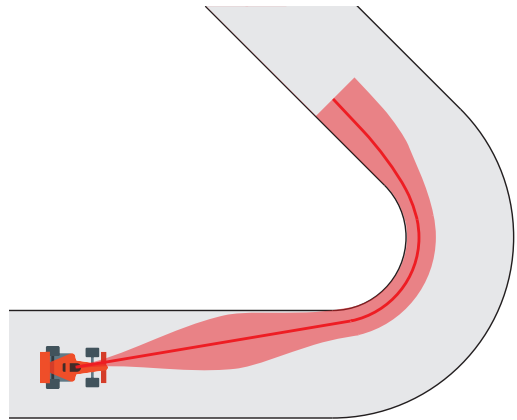}}\\
	\caption{Comparison of MPC approaches for the vehicle control example.
		Each figure shows a planned trajectory for a race car using different MPC approaches. 
		The bold lines indicate the mean trajectories, and the shaded areas represent 1-$\epsilon$ confidence regions.
		By directly controlling the covariance, it is possible to design more aggressive controllers that operate closer to the constraints.\label{fig:RaceCarControl}
	}
\end{figure}

For this example, we use the linearized bicycle model assuming constant longitudinal vehicle speed~\cite{ackermann1993robust} shown in Fig.~\ref{fig:BicycleModel}.
\begin{figure}[htbp]
	\centering
	\includegraphics[width=0.6\columnwidth]{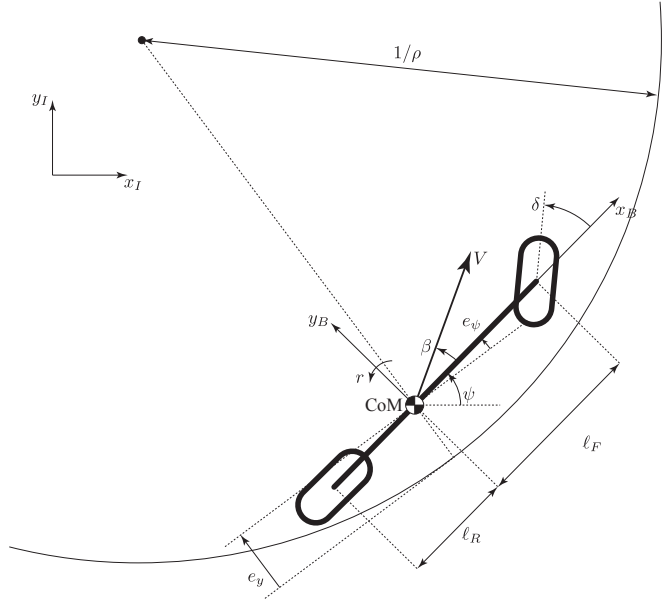}
	\caption{Bicycle model. ($x_I$, $y_I$) is the inertial frame, and ($x_B$, $y_B$) is the body frame.  \label{fig:BicycleModel}}
\end{figure}
The continuous dynamics is described as follows.
\begin{subequations}\label{eq:BicycleModel}
	\begin{align}
		\dot{\beta} &= -\frac{C_r + C_f}{mV_x}\beta + \left(-1 + \frac{\ell_RC_r - \ell_FC_f}{mV_x^2}\right)r + \frac{C_f}{mV_x} \delta,\\
		\dot{r} &= \frac{\ell_RC_r - \ell_FC_f}{I_z}\beta - \frac{\ell_R^2C_r + \ell_F^2C_f}{I_zV_x}r+ \frac{\ell_FC_f}{I_z}\delta,\\
		\dot{e_\psi} &= r - V_x \rho,\\
		\dot{e_y} &= V_x\beta + V_x e_\psi.
	\end{align}
\end{subequations}
The state variables in~(\ref{eq:BicycleModel}) are the side-slip angle $\beta$, the vehicle yaw rate $r$, the heading angle error $e_\psi$, and the lateral deviation error $e_y$.
The inputs to the system are the front wheel angle $\delta$ and the curvature of the road centerline $\rho$, which is a function of the distance along the road centerline.
We assume constant longitudinal velocity.
The system parameters are listed in Table~\ref{table:VehicleParameters} along with the numerical values used in this example.
Note that the vehicle direction angle~$\psi$ can be computed from
\begin{align}
	\dot{\psi} = r.
\end{align}
The vehicle dynamics~(\ref{eq:BicycleModel}) can be represented as an LTI system
\begin{align}
	\dot{x} = A_cx  + B_cu + C_c \rho,
\end{align}
where $x = \begin{bmatrix}
\beta & r & e_\psi & e_y
\end{bmatrix}^\top$ and $u = \delta$.
Using zero-order hold with $\Delta t=0.5$~sec, the discretized LTI dynamics are
\begin{align}\label{eq:NominalVehicleDynamics}
	x_{k+1} = A x_k + B u_k + C \rho_k.
\end{align}
Setting~(\ref{eq:NominalVehicleDynamics}) as the nominal dynamics, our interest is to control the following stochastic dynamics
\begin{align}\label{eq:StochasticVehicleDynamics}
	x_{k+1} = A x_k + B u_k + C\rho_k + Dw_k,
\end{align}
where $D = 0.01I_4$ using the CS-SMPC framework.
The noise term embodies modeling errors as well as disturbances stemming from the interaction of the vehicle wheels with the ground.
The geometry of the road circuit is depicted in Fig.~\ref{fig:RoadGeo}.
The vehicle starts from the origin and drives around the track counter-clockwise.
The state constraint encodes the requirement to keep the vehicle on the road and the system state close enough to the origin.
Specifically, we impose the state constraints
\begin{align}\label{eq:stayInRoadConst}
\begin{bmatrix}
\beta_{\min} \\ r_{\min} \\ e_{\psi,\min} \\ e_{y,\min}
\end{bmatrix} \leq x_k \leq
\begin{bmatrix}
\beta_{\max} \\ r_{\max} \\ e_{\psi,\max} \\ e_{y,\max}
\end{bmatrix},
\end{align}
for all $k \geq 0$.
Notice that, although the road circuit in Fig.~\ref{fig:RoadGeo} is non-convex in the global coordinate frame, the state constraint~(\ref{eq:stayInRoadConst}) is convex.
We set $\beta_{\max} = 0.1$ rad, $\beta_{\min} = -0.1$ rad, $r_{\min} = -1.5$~rad/s, $r_{\max} = 1.5$~rad/s, $e_{\psi,\min} = -0.5$~rad, $e_{\psi,\max} = 0.5$~rad, $e_{y,\min} = -2$~m, and $e_{y,\max} = 2$~m.
In addition, the steering wheel angle is restricted to
\begin{align}
	\delta_{\min} \leq \delta_k \leq \delta_{\max},
\end{align}
for all $k \geq 0$.
In this work we set $\delta_{\min} = -0.25$~rad and $\delta_{\max} = 0.25$~rad.
The state and input constraints were formulated in terms of chance constraints as in (\ref{eq:InfinitecontrolSC}) and (\ref{eq:InfinitecontrolCC}) with $p_{x,i} = p_{u,j} = 10^{-3}$.
The length of the horizon was set to $N = 8$, which corresponds to an actual time horizon of 4 sec.
The cost matrices were set as
\begin{align}\label{eq:VehiceExQandR}
	Q = \mathtt{blkdiag}(10^{-2},0,10^{-2},10^{-8}),\qquad R = 1.
\end{align}
We chose these values so that the vehicle fully utilizes the width of the road, while minimizing the control energy.
Note that, unlike the previous example in Section~\ref{subsec:NSSimpleExample}, the approach in~\cite{farina2013probabilistic} does not work for this scenario, because the terminal covariance in~(\ref{eq:FarinaTerminalCond}) becomes
\begin{align}
\Sigma_f = \begin{bmatrix}
 0.0001 &  -0.0000 &   0.0000 &   0.0002\\
-0.0000 &   0.0001 &  -0.0001 &  -0.0072\\
0.0000  & -0.0001  &  0.0005  & -0.0003\\
0.0002  & -0.0072  & -0.0003  & 26.9796
\end{bmatrix},
\end{align}
and the variance of $e_y$ is too large to satisfy the constraint~(\ref{eq:stayInRoadConst}).
In order to satisfy this constraint we need to modify $\Sigma_f$.
Since $\Sigma_f$ in~(\ref{eq:FarinaTerminalCond}) is an implicit function of the $Q$ and $R$ matrices, 
 we need to tune
 $Q$ and/or $R$ weight matrices by trial and error  till a suitable values for $\Sigma_f$ is found.
 For this problem, it was found that modifying the (4,4) element of $Q$ results in a solution for $\Sigma_f$
that eventually makes the vehicle stay closer to the centerline of the road.
 %\alert{need to check}

\begin{table}
	\centering
	\caption{Vehicle parameters and values. (CoM: the center of mass of the vehicle).\label{table:VehicleParameters}}
	\begin{tabular}{|c|c|c|}
		\hline
		Notation & Meaning & Used numerical value \\\hline
		\hline
		$m$& Vehicle mass &  1653 kg \\\hline
		$I_z$& Vehicle yaw inertia  & 2765 $\mathrm{kgm}^2$  \\\hline
		$V_x$& Longitudinal velocity & 15 m/s  \\\hline
		$\ell_F$& Distance from CoM to the front axle & 1.402 m  \\\hline
		$\ell_R$& Distance from CoM to the rear axle & 1.646 m  \\\hline
		$C_f$& Front tire cornering stiffness  & 42 kN/rad  \\\hline
		$C_r$& Rear tire cornering stiffness & 81 kN/rad \\\hline
	\end{tabular}
\end{table}

\begin{figure}
	\centering
	\includegraphics[width=0.7\columnwidth]{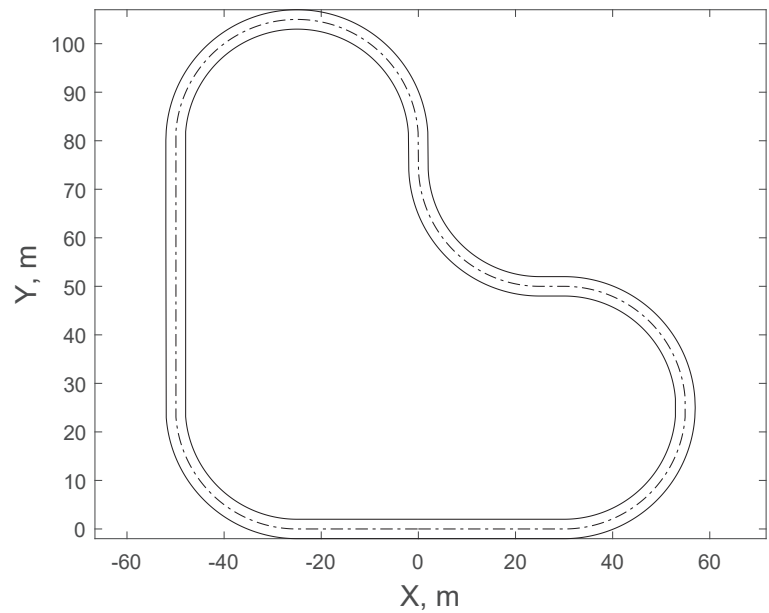}
	\caption{Geometry of the road circuit.\label{fig:RoadGeo}}
\end{figure}

Next, we present the result with the proposed CS-SMPC approach.
In order to determine the terminal covariance, we first solve the following problem to obtain an assignable covariance.
\begin{subequations}
	\begin{align}
		\minimize &\ \|\Sigma_f - \Sigma^d_f\|_F,\\
		\textrm{subject to} &\ (\ref{eq:AssignableConds}),
	\end{align}
\end{subequations}
where $\Sigma^d_f$ is a desired terminal covariance computed as the terminal covariance when the system is controlled by a stabilizing controller.
For our example an LQR controller was chosen with values of $Q$ and $R$ given in~(\ref{eq:VehiceExQandR}).
Specifically, the covariance dynamics is computed by
\begin{equation}
	\Sigma_{t+1|k} = (A+BK_{\rm LQR})\Sigma_{t|k}(A+BK_{\rm LQR})^\top + DD^\top,\qquad
	\Sigma_{k|k} = 0,
\end{equation}
where we set $\Sigma^d_f = \Sigma_{k+N|k}$.
In this example, the values of $\Sigma^d_f$ and $\Sigma_f$ were computed as follows
\begin{align*}
\Sigma^d_f = \begin{bmatrix}
	 0.0001 &  -0.0000 &   0.0000 &   0.0001\\
	-0.0000 &   0.0001 &  -0.0001 &  -0.0026\\
	0.0000  & -0.0001  &  0.0004  &  0.0087\\
	0.0001  & -0.0026  &  0.0087  &  0.3595
\end{bmatrix},\quad
\Sigma_f = \begin{bmatrix}
	0.0001 &  -0.0000  &  0.0000  &  0.0001\\
	-0.0000&    0.0002 &  -0.0001 &  -0.0023\\
	0.0000 &  -0.0001  &  0.0002  & -0.0002\\
	0.0001 &  -0.0023  & -0.0002  &  0.3640
\end{bmatrix}.
\end{align*}
Using this value of~$\Sigma_f$, we computed $\tilde{K}$ from~(\ref{eq:Ktilde}) and $P_{\rm mean}$ from~(\ref{eq:meanCond}).
Figure~\ref{fig:CSMPC} shows 100 sample trajectories controlled by the CS-SMPC algorithm.
The vehicle successfully satisfies the constraints and stays on the road despite the stochastic disturbance.

Contrary to the previous trial and error approach to specify the suitable terminal covariance, 
the proposed CS-SMPC approach allows us to \textit{directly} shape $\Sigma_f$ so that the state satisfies the probabilistic constraints at the end of the horizon.
A consequence is that we can choose  $\Sigma_f$ so that the mean state of the vehicle state operates closer to the road boundaries (see also Fig.~\subref*{fig:RaceCarCSSMPC}),
thus taking full advantage of the available operational region.

\begin{figure}
	\centering
	\includegraphics[width=0.7\columnwidth]{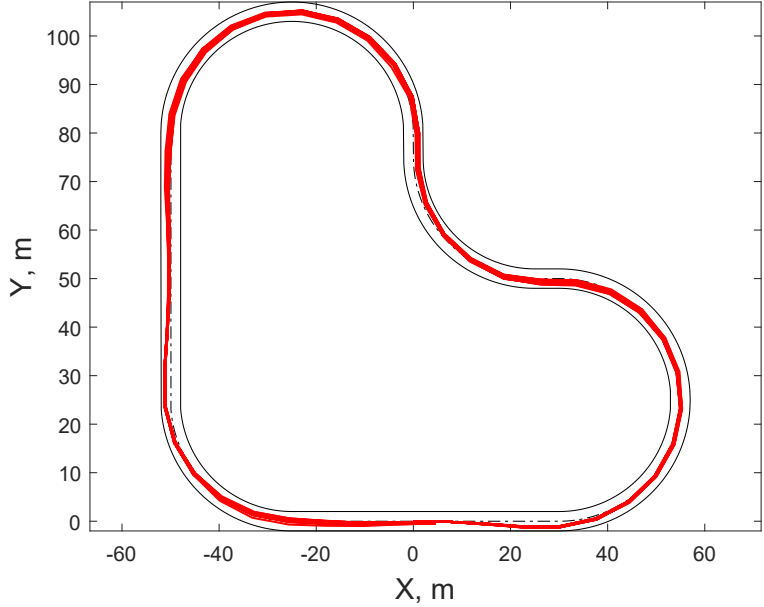}
	\caption{100 sample trajectories controlled by CS-SMPC approach.\label{fig:CSMPC}}
\end{figure}

We also compared CS-SMPC against a deterministic MPC controller to illustrate the benefits of a stochastic problem formulation.
Specifically, we ignore the additive noise in~(\ref{eq:StochasticVehicleDynamics}) at each time step, and minimize the following quadratic cost
\begin{align}
	J = \sum_{k=0}^N (x_k^\top Q x_k + u_k^\top R u_k) + x_N^\top Q_p x_N,
\end{align}
where $Q_p$ is the solution of the following discrete-time algebraic Riccati equation
\begin{align}
	A^\top Q_p A - A^\top Q_p B(B^\top Q_p B + R)^{-1}B^\top Q_p A + Q = Q_p.
\end{align}
The initial condition of the state was set to zero.
The resulting trajectory without noise is depicted in Fig.~\ref{fig:DetMPC}.
However, when noise is added to the system, this controller cannot satisfy the constraints as the vehicle gets too close to the inner edge of the road.
This is expected since the deterministic MPC controller does not consider the added disturbance.

\begin{figure}
	\centering
	\includegraphics[width=0.7\columnwidth]{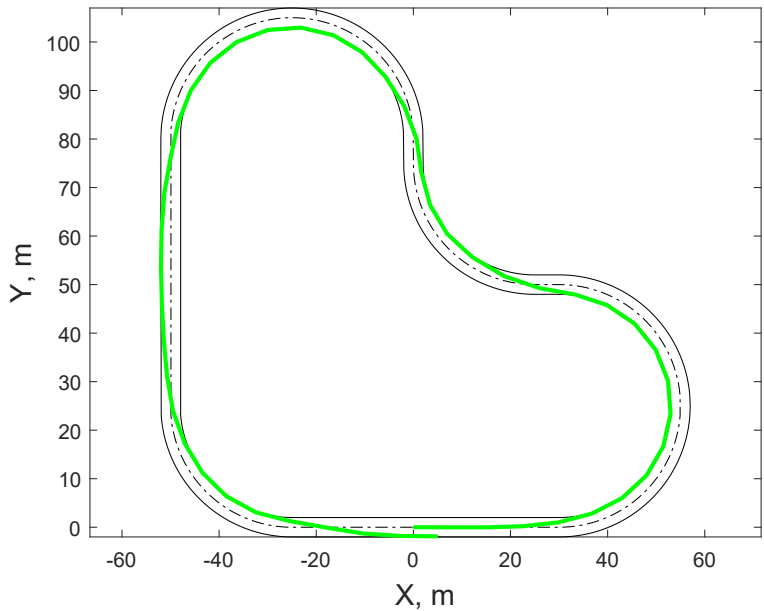}
	\caption{Result of Deterministic MPC without noise.\label{fig:DetMPC}}
\end{figure}

\section{Conclusions}  \label{sec:Summary}

In this paper, we introduced a novel stochastic model predictive control scheme for constrained linear systems with additive Gaussian noise.
The proposed approach makes use of the recently developed finite horizon optimal covariance steering theory, which converts the original stochastic optimal control problem at each iteration of the MPC algorithm to a deterministic convex programming problem.
We showed that the CS-SMPC approach ensures recursive feasibility and guaranteed stability.
In contrast to previous robust and stochastic MPC approaches that guarantee recursive feasibility assuming that the disturbances lie in a compact set, the proposed CS-SMPC approach guarantees this property by constraining the maximal terminal covariance instead.
By doing so, we are able to deal with unbounded additive noise disturbance.
In addition, via numerical simulations, we showed that using covariance steering to compute the future state covariance is more accurate 
and computationally more efficient than previously proposed approaches in the literature.

It is worth noting that since LTI systems are considered, the mean and covariance equations hold even when the noise  
has a non-Gaussian distribution. 
Of course, in the non-Gaussian case the true state distribution can become much more complicated, 
and that this would affect the state constraint implementation. 
However, the rest of the CS-SMPC analysis remains virtually the same.

One drawback of setting the maximal terminal covariance is the need to use semidefinite programming (SDP) to solve the relevant optimization problem.
Solving SDP problems is, in general, computationally more involved than solving a linear program (LP) or a quadratic program (QP), which are used in the majority of MPC algorithms.
However, efficient algorithms do exist for solving SDP such as~\cite{helmberg1996interior,yamashita2012primal}, hence the proposed CS-SMPC approach is still an attractive alternative to existing SMPC methods.

\vspace{20pt}
\textbf{Acknowledgment:}
This work has been supported by ARO award W911NF-16-1-0390 and NSF award CPS-1544814.
The first author has also received partial support from the Funai Foundation for Information Technology.
The authors would also like the thank the anonymous reviewers for their excellent suggestions to improve this paper and for also pointing out the connections 
of CS-SMPC with the work of~\cite{farina2013probabilistic}.

\bibliographystyle{IEEEtran}
\bibliography{CSMPC,InputConstrainedCS}

\end{document}